\def\RR{{\mathbb R}}
\def\ZZ{{\mathbb Z}}
\def\C{{\mathbb C}}
\def\CC{{\mathbb C}}
\def\ph{{\mathit p^{(h)}}}
\def\Rhc{{\mathcal R}^{(h)}}
\def\Rhcfr{{\mathcal R}^{(h),fr}}
\def\NN{{\mathbb N}}\def\RR{{\mathbb R}}
\def\conv{\text{\normalfont conv}}
\newcommand{\omh}{\omega^{(h)}}
\newcommand{\Rh}{R^{(h)}}
\newcommand{\Rhi}{R^{(h)}_\infty}
\def\Jo{J_\omega}
\def\black{\color{black}}
\newcommand{\etalchar}[1]{$^{#1}$}
\newcommand{\xh}[1]{\mathbf x^{(h)}_{#1}}
\newcommand{\Aha}[1]{A^{(h)}_\omega}
\newtheorem{Theorem}{Theorem}
\newtheorem{Lemma}[Theorem]{Lemma}
\newtheorem{Remark}[Theorem]{Remark}
\newtheorem{Example}[Theorem]{Example}
\newtheorem{Proposition}[Theorem]{Proposition}
\newtheorem{Notation}[Theorem]{Notation}
\author{Anna Chiara Lai
  \and Paola Loreti}
\title[Robot's hand and expansions in non-integer bases]{Robot's hand and expansions in non-integer bases}
\address{Dipartimento di Scienze di Base e Applicate per l'Ingegneria\\
Sezione di Matematica\\
Sapienza Universit\`a di Roma, Italy}
\keywords{Robot hand, discrete control, expansions in non-integer bases, expansions in complex bases}
\begin{document}

\maketitle

\begin{abstract}\black We study a robot hand model in the
framework of the theory of expansions in non-integer bases. We
investigate the reachable workspace and we study some configurations enjoying form closure properties.
\end{abstract}\black

\section{Introduction}
Aim of this paper is to give a model of a robot's hand based on the theory of expansions
in non-integer bases. Self-similarity of configurations and an
arbitrarily large number of fingers (including the opposable thumb) and
phalanxes are the main features.  Binary controls rule the dynamics of
the hand, in particular the extension and the rotation of each phalanx.

 Our robot hand is composed by an arbitrary number of fingers, including the opposable thumb. 
Each finger moves on a plane. Every plane is assumed to be parallel to the others, excepting the thumb and the index finger, that belong to the same plane. 

A discrete dynamical system models the position of the extremal
junction of every finger. A \emph{configuration} is a sequence of states of
the system corresponding to a particular choice for the controls,
while the union of all the possible states of the system is named
\emph{reachable workspace for the finger}. The closure of the reachable workspace is named
\emph{asymptotic reachable workspace}. 
Our model includes two binary control parameters on every phalanx of
every finger of the robot hand. The first control parameter rules the length of
the phalanx, that can be either $0$ or a fixed value, while the
other control rules the angle between the current phalanx and the
previous one. Such an angle can be either $\pi$, namely the phalanx
is consecutive to the previous, or a fixed angle $\pi-\omega\in(0,\pi)$.

The structure of the finger ensures the set of possible configurations to be self-similar. In particular the sub-configurations can be looked at as scaled miniatures
with constant ratio $\rho$, named \emph{scaling factor}, of the whole structure. This is the key idea underlying our model and our main tool of investigation. 

We establish a connection between our model and  the theory of iterated function systems and the theory of expansions in non-integer bases. 
This yields several results describing the reachable workspace, some conditions on the parameters in order to avoid self-intersecting configurations 
and a description of a class of configurations satisfying a form closure condition.

\subsection{Previous work and motivations}

 The fingers of our robot hand are planar manipulators with rigid links and with a (arbitrarily) large number of degrees of freedom, 
that is they belong to the class of so-called macroscopically-serial hyper-redundant manipulators (the term was first introduced in \cite{term}).

 Hyper-redundant architecture was intensively studied back to the late 60's, when the first prototype of 
hyper-redundant robot arm was built \cite{snake}. 
The interest of researchers in devices with redundant controls was motivated, among others, by
the ability to avoid obstacles and the ability to perform new forms of robot locomotion and grasping (see for instance
\cite{Bai86}, \cite{Bur88} and \cite{CB95}).

A large number of papers were devoted in the literature to both continuously and discretely controlled hyper-redundant manipulators. 
Our approach, based on discrete actuators, is motivated by their precision with low
cost compared to actuators with continuous range-of-motion. Moreover the resulting discrete space of configurations reduces the cost of 
 position sensors and feedbacks.

In \cite{IC96} the inverse kinematics of discrete hyper-redundant manipulators is investigated. Throughout the analysis of the reachable workspace (and in particular of the density of its points)
an algorithm solving the inverse kinematics problem in linear time with respect the number of actuators is introduced. In general the number of points of 
the reachable workspace increases exponentially, the computational cost on the optimization of the density distribution of the workspace is investigated in \cite{LSD02}

Note that the concept of a binary tree describing all the possible configurations underlies above mentioned approaches, in our method the self-similar structure of such a tree gives access to
well-established results on fractal geometry and iterated function systems theory.  Robotic devices with a similar fractal structure are described in \cite{NASA}.  

Other approaches to the investigation of the reachable workspace include those based on  harmonic analysis \cite{Chr96}, and  Fast Fourier Transform
\cite{WC04}. Finally we refer to \cite{Chr00} for a description of the geometry of the reachable workspace.

In our model every link (phalanx) is controlled by a couple of binary controls. The control of the rotation at every joint is a common feature of all above mentioned manipulators. The study of a control ruling
the extension of every link has twofold applications. In one hand it can be physically implemented by
 means of telescopic links, that are particularly efficient in constrained workspaces (see \cite{tele}).
 On the other hand, our model can be considered a discrete approximation of continuous snake-like manipulators - see for instance the approach in \cite{snake1} to the discretization 
of a continuous curve and its applications to snake-like robots.

Form closure is a property of grasping mechanisms originally investigated in \cite{formclosure1} and it concerns the ability of the manipulator of totally or partially 
constrain the motion of a manipulated object. We refer to \cite{Bic95} for an overview on the closure properties of manipulators and their applications. 
In our analysis we shall focus on the case of planar manipulators constraining a circle without considering external forces. Some numerical examples are given for three-dimensional manipulators constraining a cylinder. 
This investigation has twofold motivations: in one hand, in our model the length of links decreases exponentially and we want to understand  by an explicit geometric argument 
how much this assumption affects the capability of the manipulator of interacting with other objects. On the other hand this simplified setting enlightens how self-similarity can
be used in order to extend local geometric properties to a wider class of configurations. We remark that a more complete analysis 
would involve the  investigation of the stronger condition of stable grasping, by also considering external forces and by handling sliding and rotational symmetries. This is however beyond the purposes of present investigation which is mostly concerned with the relations between geometrical properties
and self-similarity of configurations.

Some of our theoretical tools come from the theory of non-integer bases. For an overview on this topic we refer to \cite{Ren57}, \cite{Par60}, \cite{EK98} and to the book \cite{DK02a}. 
In particular, expansions in non-
integer bases were introduced in \cite{Ren57}. For the geometrical
aspects of the expansions in complex base namely the arguments that
are more related to our problem, we refer to \cite{Knu60}, \cite{Pen65},\cite{GG79},
\cite{Gil81},\cite{Gil87},\cite{akisurvey},\cite{akitop3} and to \cite{IKR92}.

\subsection{Organization of present paper}
The paper is organized as follows. In Section 2 we introduce the model and in Section 3 we remark its relation with the theory of non-integer number systems. 
Self-similarity of the configurations and some reachability results are showed in Section 4. 
In Section 5 we discuss a necessary and sufficient condition to avoid self-intersecting configurations in a particular case.
Form closure properties  are finally investigated in Section \ref{sectiongrasp}.

\section{The model}
 In our model the robot hand is composed by
$H$ fingers, every finger has an arbitrary number of phalanxes. We
assume junctions and phalanxes of each finger to be thin, so to be
respectively approximated with their middle axes and barycentres and
we also assume the junctions of every finger to be coplanar.
Inspired by the human hand, we set the fingers of our robot as
follows: the first two fingers are coplanar and they have in common
their first junction (they are our robotic version of the thumb and
the index finger of the human hand) while the remaining $H-1$ fingers
belong to parallel planes. By choosing an appropriate coordinate system $oxyz$ we may assume that the the first two fingers belong to the plane
$\mathit p^{(1)}: z=0$ while, for $h\geq 2$, $h$-th finger belongs to the plane $\mathit p^{(h)}: z=z_0^{(h)}$ for some $z_0^{(2)},\dots,z_0^{(H)}\in\RR$.

\begin{figure}
\begin{center}
\begin{picture}(300,150)
\put(-30,0){\includegraphics[scale=0.6]{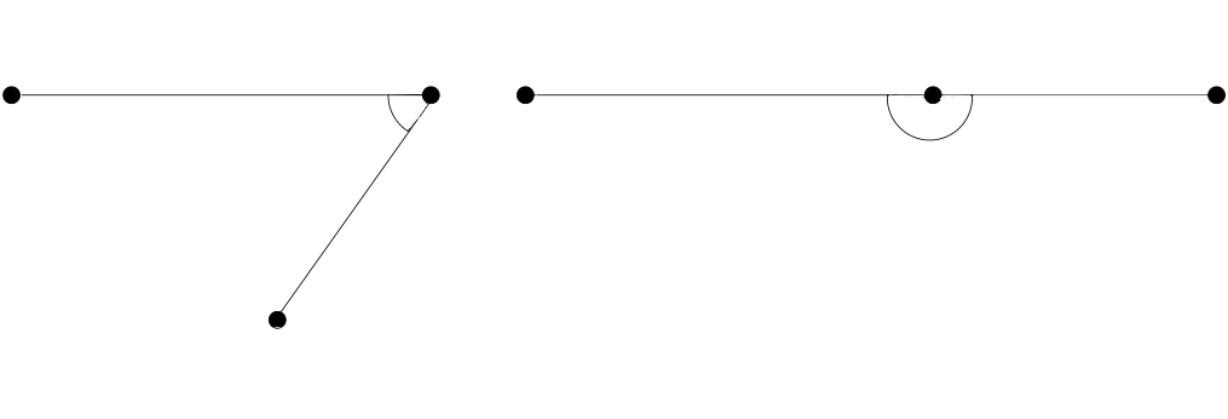}}
\put(0,0){\textsc{(A)} $\omega_1=v_1 \omega=\pi/3$}
\put(200,0){\textsc{(B)} $\omega_1=v_1 \omega=0$}
 \put(-30,100){$\mathbf x_{k-1}$}
 \put(120,100){$\mathbf x_{k-1}$}
\put(90,100){$\mathbf x_{k}$}
\put(50,85){$\pi-\omega_1$}
\put(220,70){$\pi-\omega_1(=\pi)$}
\put(235,100){$\mathbf x_{k}$}
\put(320,100){$\mathbf x_{k+1}$}
 \put(60,25){$\mathbf x_{k-1}$}
 \end{picture}
\end{center}
 \caption{\label{figexample}In both cases $u_{k}=u_{k+1}=1$, $\omega=\pi/3$.}
\end{figure}

\begin{figure}
\begin{center}
\begin{picture}
(0,130)
\hskip-0.5cm\subfloat[$v_{k+1}=0$;]{
\includegraphics[scale=0.6]{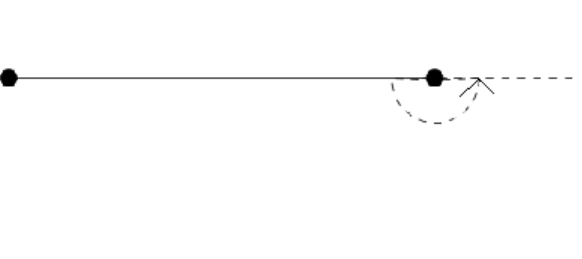}}
\put(-170,60){$\xh{k-1}$}
\put(-60,60){$\xh{k}\equiv \xh{k+1}$}
\put(-45,30){$\pi$}
 \end{picture}
\hskip6cm
\subfloat[$v_{k+1}=1$.]{
\includegraphics[scale=0.6]{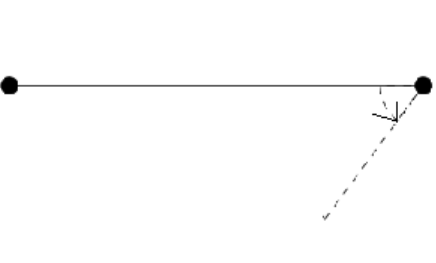}}
\put(-130,60){$\xh{k-1}$}
\put(-20,60){$\xh{k}\equiv \xh{k+1}$}
\put(-45,40){$\pi-\omega$}
 \end{center}

 \caption{\label{aff1}In both cases $u_{k+1}=0$.}
\end{figure}
\begin{figure}[ht]
\begin{center}
\hskip-2.5cm\subfloat[$v_{k+1}=0$;]{
\includegraphics[scale=0.6]{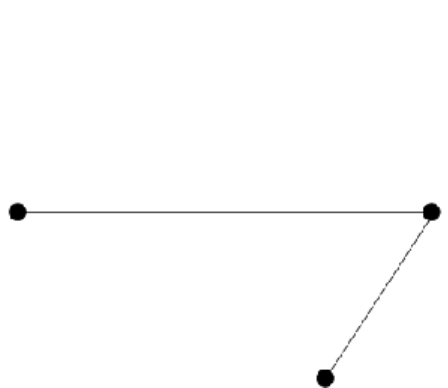}}
\put(-130,57){$\xh{k-1}$}
\put(-25,57){$\xh{k}\equiv \xh{k+1}$}
\put(-55,10){$\xh{k+2}$}
 \hskip4.5cm
\begin{picture}(0,150)
\hskip-3.2cm\subfloat[$v_{k+1}=1$.]{
\includegraphics[scale=0.6]{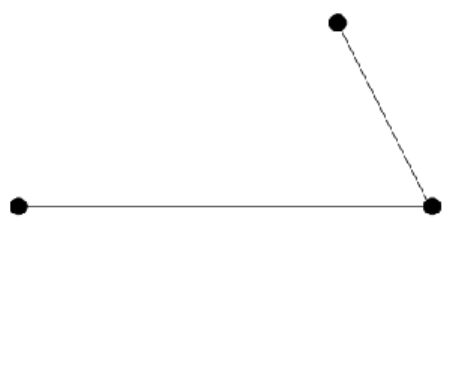}}
\put(-132,42){$\xh{k-1}$}
\put(-24,42){$\xh{k}\equiv \xh{k+1}$}
\put(-63,105){$\xh{k+2}$}
 \end{picture}
\end{center}
 \caption{\label{aff2}In both cases $u_{k+1}=0$, $u_{k+2}=1$ and $v_{k+2}=1$.}
\end{figure}

We now describe in more detail the model of a robot finger. A
configuration of a finger is the sequence $(\mathbf x_k)_{k=0}^K\subset
\RR^3$ of its junctions. The configurations of every finger are
ruled by two phalanx-at-phalanx motions: extension and rotation. In
particular, the length of $k$-th phalanx of the finger is either $0$
or $\dfrac{1}{\rho^k}$, where $\rho>1$ is a fixed ratio: this choice
is ruled by the a binary control we denote by using the symbol
$u_k$, so that the length $l_k$ of the $k$-th phalanx is
$$l_k:=\parallel \mathbf x_k-\mathbf x_{k-1}\parallel=\frac{u_k}{\rho^k}.$$

As all the phalanxes of a finger belong to the same plane, say $\mathit p$, in order
to describe the angle between two consecutive phalanxes, say the
$k-1$-th and the $k$-th phalanx, we just need to consider a
one-dimensional parameter, $\omega_k$. Each phalanx can lay
on the same line as the former or it can form with it a fixed planar angle $\omega\in(0,\pi)$, whose
vertex is the $k-1$-th junction. In other words, two consecutive phalanxes form either the angle $\pi$ or $\pi-\omega$. By introducing the binary control
$v_k$ we have that the angle between the $k-1$-th and $k$-th phalanx
is $\pi-\omega_k$, where
$$\omega_k=v_k\omega.$$

See Figure \ref{figexample} for the general case and Figure \ref{aff1} and Figure \ref{aff2} for the case $u_k=0$.

To describe the kinematic of the finger we adopt the Denavit-Hartenberg (DH) convention.
To this end, first of all recall that our base coordinate frame $oxyz$ is such that $oxy$ is parallel to $p$ (hence to every plane $\mathit p^{(h)}$) 
and we consider the finger coordinate frame $o_0x_0y_0z_0$ associated to the $4\times 4$ homogeneous transform 
$$A_0=\begin{pmatrix}
       \cos \omega_0 &-\sin \omega_0& 0&x_0\\
       \sin \omega_0 &\cos \omega_0& 0&y_0\\
       0 &0& 1&z_0\\
       0 &0& 0&1
      \end{pmatrix}$$
for some $\omega_0\in[0,2\pi)$. In particular if $\mathbf x$ and $\mathbf x_0$ are respectively coordinates of a point with respect to $oxyz$ and $o_0x_0y_0z_0$ then
$$\begin{pmatrix}
   \mathbf x\\
1
  \end{pmatrix}=A_0 \begin{pmatrix}
\mathbf x^{0}\\
1
\end{pmatrix}.$$
\begin{Remark}
 When only one finger is considered one may assume the base coordinate frame to coincide with the finger coordinate frame: this reduces $A_0$ to the identity and it could be omitted it in the model.
 The need of a coordinate frame for the finger rises when more than one finger, especially in the case of co-planar, opposable fingers, is considered. 
\end{Remark}

Now, the (DH) method consists in attaching to every phalanx, say the $k$-th phalanx, a coordinate frame $o_kx_ky_kz_k$, 
so that $\mathbf x_k$ coincides with $o_k$ and $\mathbf x_k-\mathbf x_{k-1}$ is parallel to $o_kx_k$ (see Figure \ref{figref}).
 Note that the coordinates of $\mathbf x_{k+1}$ with respect to
$o_kx_ky_kz_k$ are $(\frac{u_{k+1}}{\rho^{k+1}}\cos \omega_{k+1},\frac{u_{k+1}}{\rho^{k+1}}\sin \omega_{k+1},0)$.

 Since we are considering a planar manipulator, for every $k>1$ the geometric relation between the coordinate systems the $k-1$-th and the $k$-th phalanx is expressed by the matrix
$$A_k:=\begin{pmatrix}
       \cos \omega_k &-\sin \omega_k& 0&\frac{u_k}{\rho^k}\cos \omega_k\\
       \sin \omega_k &\cos \omega_k& 0&-\frac{u_k}{\rho^k}\sin \omega_k\\
       0 &0& 1&0\\
       0 &0& 0&1
      \end{pmatrix}$$
where the rotation matrix
$$\begin{pmatrix}
       \cos \omega_k &-\sin \omega_k& 0\\
       \sin \omega_k &\cos \omega_k& 0\\
       0 &0& 1
      \end{pmatrix}$$
represents the rotation of the coordinate frame $o_kx_ky_kz_k$ with respect to $o_{k-1}x_{k-1}y_{k-1}z_{k-1}$ and the vector $(\frac{u_k}{\rho^k}\cos \omega_k,-\frac{u_k}{\rho^k}\sin \omega_k,0)$ represents
the position of $o_k$ with respect to $o_{k-1}x_{k-1}y_{k-1}z_{k-1}$. 

Set
$$T_k:=\prod_{j=0}^k A_j.$$
By definition $T_k$ is the composition the transforms $A_0,\dots,A_k$ and, consequently, it represents the relation between the base coordinate frame $oxyz$ and $o_kx_ky_kz_k$.
In particular
$$T_k=\begin{pmatrix}
       R_k& P_k\\
       0 &1
      \end{pmatrix}.$$
where $R_k$ is a $3\times 3$ rotation matrix and the entries of the vector $P_k$  are the coordinates of $o_k(=\mathbf x_k)$ in the reference system $oxyz$. 
Expliciting $T_k$ one has
$$R_k=\begin{pmatrix}
       \cos\left(\sum_{j=0}^k \omega_j\right)&-\sin\left(\sum_{j=0}^k \omega_j\right)&0\\
       \sin\left(\sum_{j=0}^k \omega_j\right)&\cos\left(\sum_{j=0}^k \omega_j\right)&0\\
0&0&1
      \end{pmatrix}
$$
\begin{figure}
\vskip-0.5cm \begin{picture}(300,250)
\put(0,0){\includegraphics[scale=2]{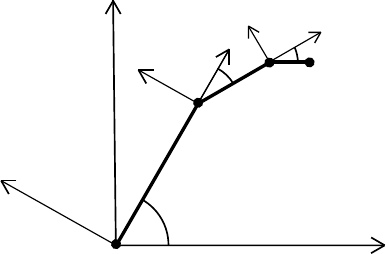}}
\put(0,50){\large$y_0$}
\put(360,20){\large $x$}
\put(92,238){\large $y$}
\put(100,-5){\large$o=o_0=\mathbf x_0$}
\put(165,28){\large$\omega_0$}
\put(250,220){\large$y_2$}
\put(305,182){\large$\mathbf x_3$}
\put(288,192){\large$\omega_3$}
\put(198,200){\large$x_0=x_1$}
\put(197,140){\large$o_1=\mathbf x_1$}
\put(221,172){\large$\omega_2$}
\put(300,220){\large$ x_2$}
\put(140,183){\large$ y_1$}
\put(255,173){\large$ o_2=\mathbf x_2$}
 \end{picture}
\caption{\label{figref} A finger with rotation control vector $(0,1,1)$ (in particular $\omega_1=0$ and $\omega_2=\omega_3=\pi/6$) and extension control vector $(1,1,1)$}
\end{figure}

and
$$P_k=P_0+\sum_{j=1}^k R_j \begin{pmatrix} 
\frac{u_j}{\rho^j}\\
0\\
0
\end{pmatrix}=
\begin{pmatrix}
 x_0+\sum_{j=1}^k \frac{u_j}{\rho^j}\cos\left(\sum_{n=0}^j \omega_n\right)\\
 y_0-\sum_{j=1}^k \frac{u_j}{\rho^j}\sin\left(\sum_{n=0}^j \omega_n\right)\\
z_0
\end{pmatrix}
$$
Then for every $k\geq0$
$$\mathbf x_k=P_k=\begin{pmatrix}
 x_0+\sum_{j=0}^k \frac{u_j}{\rho^j}\cos\left(\sum_{n=0}^j \omega_j\right)\\
 y_0-\sum_{j=0}^k \frac{u_j}{\rho^j}\sin\left(\sum_{n=0}^j \omega_j\right)\\
z_0\end{pmatrix}.$$

\subsection{Reachable workspace}
From now on we shall assume $(x_0,y_0,z_0)=(0,0,z_0)$, so that given a couple of control vectors $\mathbf u=(u_j)_{j=1}^k$ and $\mathbf v=(v_j)_{j=1}^k$ one has
$$\mathbf x_k=\mathbf x_k(\mathbf u,\mathbf v)=\begin{pmatrix}
 \sum_{j=1}^k \frac{u_j}{\rho^j}\cos\left(\sum_{n=0}^j \omega_j\right)\\
 \sum_{j=1}^k \frac{u_j}{\rho^j}\sin\left(\sum_{n=0}^j \omega_j\right)\\
z_0\end{pmatrix}.$$

Now, we index the fingers of our hand by $h\in \{1,\dots,H\}$, so that the $k$-th junction of the $h$-th finger reads $\mathbf x_k^{(h)}$, the scaling ratio and the maximal rotation angle 
respectively read $\rho^{(h)}$ and $\omega^{(h)}$ (so that $\omega_k^{(h)}=\omega^{(h)}v_k$), the orientation of the $h$-finger with respect to the
base reference frame is $\omega_0^{(h)}$, the $z$ coordinate of every junction is $z_0^{(h)}$. 

We also define $\Omega^{(h)}_j(\mathbf v):=\sum_{n=1}^j \omega^{(h)}_n=\sum_{n=1}^j \omega^{(h)}v_n$ so that one has for every $k\geq 1$
$$\mathbf x_k^{(h)}=\xh{k}(\mathbf u,\mathbf v)=\begin{pmatrix}
 \displaystyle{\sum_{j=0}^k \dfrac{u_j}{(\rho^{(h)})^j}\cos\left(\omega_0^{(h)}+\Omega^{(h)}_j(\mathbf v)\right)}\\
 \displaystyle{-\sum_{j=0}^k \dfrac{u_j}{(\rho^{(h)})^j}\sin\left(\omega_0^{(h)}+\Omega^{(h)}_j(\mathbf v)\right)}\\
z_0^{(h)}\end{pmatrix}.$$

A point $\mathbf x=(x,y,z)\in\RR^3$  belongs to \emph{reachable workspace of the $h$-th finger} if there exists a couple of control vectors $\mathbf u=(u_j)_{j=1}^k$ and $\mathbf v=(v_j)_{j=1}^k$ such that
\begin{equation}
\mathbf x=\mathbf x^{(h)}(\mathbf u,\mathbf v).
\end{equation}
The point $\mathbf x$ belongs to the \emph{asymptotically reachable workspace of the $h$-th finger} if there exists a couple of (infinite) control vectors $(\mathbf u,\mathbf v)=((u_j)_{j\geq1},(v_j)_{j\geq1})\in\{0,1\}^\NN\times\{0,1\}^\NN$ 
satisfying
\begin{equation}
\mathbf x=\lim_{k\to \infty} \xh{k}(\mathbf u,\mathbf v)=\begin{pmatrix}
 \displaystyle{\sum_{j=1}^\infty \dfrac{u_j}{(\rho^{(h)})^j}\cos\left(\omega_0^{(h)}+\Omega^{(h)}_j(\mathbf v)\right)}\\
 \displaystyle{\sum_{j=1}^\infty \dfrac{u_j}{(\rho^{(h)})^j}\sin\left(\omega_0^{(h)}+\Omega_j^{(h)}(\mathbf v)\right)}\\
z_0^{(h)}\end{pmatrix}.
\end{equation}
We use the symbols $R^{(h)}$ and $R^{(h)}_\infty$ to respectively denote set of reachable and asymptotically reachable workspace with respect the $h$-th finger of the hand. 

We also define 
$$\Rh_k:=\left\{\mathbf x_k(\mathbf u,\mathbf v)\mid (\mathbf u,\mathbf v)\in \{0,1\}^k\times \{0,1\}^k\right\};$$
\begin{Remark}
The following relations hold
\begin{equation}
 \Rh=\bigcup_{k=0}^\infty \Rh_k;
\end{equation}
\begin{equation}
 \Rhi=\overline{\Rh};
\end{equation}
in particular for every point in the asymptotically reachable workspace there exists an arbitrarily close element of the reachable workspace.
\end{Remark}
Finally we call \emph{reachable workspace} (resp. \emph{asymptotically reachable workspace}) 
the set 
$$R:=\bigcup_{h=0}^H \Rh\quad (resp. 
R_\infty:=\bigcup_{h=0}^H \Rhi)$$

\begin{Remark}
 As we assumed all the phalanxes of a fixed finger to be coplanar, we have
$$R\subset \bigcup_{h=1}^H \ph,$$
where $\ph$ is the plane of the $h$-finger. We remark that there are only $H$ distinct planes because we assumed the first two fingers, the thumb and the forefinger, to belong to the same plane $\mathit p^{(1)}$.
\end{Remark}

\section{Robot's hand and expansions in complex bases}
In this section we discuss reachability  in the framework of expansions in complex bases. Given a complex number $\lambda$ greater than $1$ in modulus and a possibly infinite set 
$A\subset \CC$ we say that $z\in \CC$ is \emph{representable} in \emph{base} $\lambda$ and with \emph{alphabet} $A$ if there exists a sequence $(z_j)_{j\geq1}$ of digits of $A$ such that
$$z=\sum_{j=1}^\infty \frac{z_j}{\lambda^j}.$$
A digit sequence $(z_j)_{j\geq1}$ satisfying the above equality is called \emph{expansion} of $z$ in base $\lambda$ and with alphabet $A$. 
It is well-known that coplanar rotations, like the ones performed by each finger of our hand, 
can be read as products on the complex plane. Therefore to perform infinite rotations and scalings 
(like in the case of asymptotic reachability problem) equals to consider complex-based power series and, consequently, expansions in non-integer bases. 
In what follows we formalize this concept.

Fix $k$ and a couple of binary control vectors $(\mathbf u,\mathbf v)$ and note that setting 
$$c_j=c^{(h)}_j(\mathbf v):=e^{-i(\omega_0^{(h)}+\Omega_j^{(h)}(\mathbf v))}$$
 one has
$$\mathbf x^{(h)}_k=\begin{pmatrix}
 \displaystyle{\sum_{j=0}^k \frac{u_j}{(\rho^{(h)})^j}\Re(c_j)}\\
 \displaystyle{\sum_{j=0}^k \frac{u_j}{(\rho^{(h)})^j}\Im(c_j)}\\
z_0^{(h)}\end{pmatrix}$$

Consequently, the reachable workspace $\Rh$ satisfies
$$\Rh=\left\{\displaystyle{\left(\Re(z),\Im(z),z_0^{(h)}\right)\mid z=\sum_{j=1}^k \frac{u_j}{(\rho^{(h)})^j}c_j(\mathbf v)};(\mathbf u,\mathbf v)\in \{0,1\}^k\times \{0,1\}^k;~k\in\NN\right\}$$
and the study of the reachable workspace becomes equivalent to the study of the sets of complex numbers
$$\Rhc:=\left\{\sum_{j=1}^k \frac{u_j}{(\rho^{(h)})^j}e^{-i\Omega_j(\mathbf v)};(\mathbf u,\mathbf v)\in\{0,1\}^k\times \{0,1\}^k;~k\in\NN\right\},$$
Indeed 
$$\Rh=\left\{\left(\Re(e^{-i\omega_0}z),\Im(e^{-i\omega_0}z),z_0^{(h)}\right)\mid z\in \Rhc\right\}.$$
We also define
$$ \Rhc_k:=\left\{\sum_{j=1}^k \frac{u_j}{(\rho^{(h)})^j}e^{-i\Omega_j(\mathbf v)};~(\mathbf u,\mathbf v)\in\{0,1\}^k\times \{0,1\}^k\right\}$$
and
$$\Rhc_\infty:=\left\{\sum_{j=1}^\infty \frac{u_j}{(\rho^{(h)})^j}e^{-i\Omega(\mathbf v)};~(\mathbf u,\mathbf v)\in\{0,1\}^\NN\times \{0,1\}^\NN\right\}.$$
Now, set $\lambda=\lambda^{(h)}:=\rho^{(h)} e^{i\omh}$ and consider the digit set $A_j:=\{0,e^{i(j\omega-\Omega_j)}\}=\{0,e^{iN\omega},N\in\{0,1,\dots,j\}\}$, we have
$$\Rhc_\infty=\left\{\sum_{j=1}^\infty \frac{z_j}{\lambda^j}\mid z_j \in A_j;~ j\in\NN\right\}.$$
and, setting, $A:=\bigcup_j A_j$
$$\Rhc_\infty\subseteq\left\{\sum_{j=1}^\infty \frac{z_j}{\lambda^j}\mid z_j \in A;~ j\in\NN\right\}.$$
 consequently \emph{the asymptotic reachable workspace of a finger is a subset of the points whose two first coordinates are the real and imaginary part of representable numbers in base $\lambda$ and with alphabet $A$}.

\begin{Remark}\label{rmkA}
If $\omh=\frac{p}{q}2\pi$ for some $p,q\in \ZZ$ then $A$ is a finite set.

If we restrict to full-rotation configurations, namely when the rotation controls are constantly equal to $1$, we have 
$$\Omega_j=\sum_{n=1}^jv_n\omh=j\omega$$ and, consequently, $A=A_j=\{0,1\}$ for every $j$.

In the case of full-extension configurations $A$ does not contain $0$.
\end{Remark}

\section{Some features of the asymptotic reachable workspace}
\subsection{Self-similarity}\label{sself}
It is well-known that all the sets of representable numbers with positional numbers system are self-similar, in particular they are the unique fixed point of appropriate linear iterated function systems (see for instance \cite{Gil87}). 
For a general introduction on fractal geometry and, in particular on fractals generated by iterated function systems, we refer to \cite{Fal90}.

We recall that an iterated function system (IFS) is a finite set of contractive functions $f_j:\CC\to\CC$. Every IFS admits a unique closed bounded set $R$, the \emph{attractor}, 
such that $R=\mathcal F(R)$, with respect to the \emph{Hutchinson operator} 
$$\mathcal F: S\mapsto \bigcup_{j=1}^J f_j(S)$$
 In particular there exists a set $R\subseteq\CC$ s.t. for every $S\subseteq \CC$
$$\lim_{k\to\infty} \mathcal F^k(S)=R,$$
in the Hausdorff metric, i.e. $R$ is the attractor of $\mathcal F$. 

  The asymptotic reachable workspace has this property, as well. In particular we have the following result, whose proof can be found in \cite{LL11}.
\begin{Proposition}\label{pss}
 For every  $\rho>1$ and $\omega\in(0,\pi)$, the asymptotic reachable workspace $\Rhc_{\infty}(\rho,\omega)$ is the (unique) fixed point of the IFS
\begin{equation}\label{mathf}
\mathcal F_{\rho,\omega}=\{f_h:\C\to\C\mid h=1,\dots,4\} 
\end{equation}

where
\begin{center}
\begin{equation}\label{pattrac}
\begin{tabular}{ll}
$ f_1: x\mapsto \displaystyle\frac{1}{\rho}x \qquad$ & $f_2: x\mapsto \displaystyle\frac{e^{-i(\pi-\omega)}}{\rho}x$\\
 $f_3: x\mapsto \displaystyle\frac{1}{\rho}(x+1) \qquad$ & $f_4: x\mapsto \displaystyle\frac{e^{-i(\pi-\omega)}}{\rho}(x+1)$.
\end{tabular}
\end{equation}
\end{center}
\end{Proposition}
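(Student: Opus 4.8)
The plan is to realize $\Rhc_\infty$ as the unique attractor of $\mathcal F_{\rho,\omega}$, which by Hutchinson's theorem reduces to two checks: that $\Rhc_\infty$ is a nonempty compact set, and that it is invariant under the Hutchinson operator, i.e. $\Rhc_\infty=\bigcup_{h=1}^4 f_h(\Rhc_\infty)$. The fixed point is automatically unique because each $f_h$ is a similarity of ratio $1/\rho<1$ (note $|e^{-i(\pi-\omega)}|=1$), so $\mathcal F_{\rho,\omega}$ contracts the Hausdorff metric on the nonempty compact subsets of $\CC$; the whole content is thus to show that $\Rhc_\infty$ is that fixed point. First I would record compactness: the map
$$((u_j)_{j\geq1},(v_j)_{j\geq1})\longmapsto \sum_{j\geq1}\frac{u_j}{\rho^j}e^{-i\Omega_j},\qquad \Omega_j=\omega\sum_{n=1}^j v_n,$$
is continuous from the Cantor space $\{0,1\}^\NN\times\{0,1\}^\NN$ (product topology) into $\CC$, since the tail $\sum_{j>N}\rho^{-j}$ bounds the truncation error uniformly in the controls. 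Hence $\Rhc_\infty$, being the continuous image of a compact space, is compact (and nonempty, as $0\in\Rhc_\infty$).

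The heart of the argument is the self-similarity identity, which I would obtain by peeling off the first control pair $(u_1,v_1)$. Writing $\Omega_j=v_1\omega+\sum_{n=2}^j v_n\omega$ and pulling the common factor $e^{-iv_1\omega}$ out of the series, the reindexing $j\mapsto j-1$ gives
$$\sum_{j\geq1}\frac{u_j}{\rho^j}e^{-i\Omega_j}=\frac1\rho\,e^{-i\omega_1}\Bigl(u_1+\sum_{j\geq1}\frac{u_{j+1}}{\rho^{j}}e^{-i\widetilde\Omega_j}\Bigr),\qquad \omega_1=v_1\omega,$$
where $\widetilde\Omega_j=\omega\sum_{n=1}^j v_{n+1}$ is exactly the cumulative-angle sequence attached to the shifted controls $(u_{j+1})_{j\geq1},(v_{j+1})_{j\geq1}$. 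Thus the bracketed tail is again an arbitrary element $z'\in\Rhc_\infty$, and the four admissible values $(u_1,v_1)\in\{0,1\}^2$ carry $z'$ onto the four similarity maps $z'\mapsto \frac1\rho e^{-i\omega_1}(u_1+z')$ that generate $\mathcal F_{\rho,\omega}$ (the two choices $v_1=0$ returning $f_1,f_3$ verbatim, the two choices $v_1=1$ supplying the rotating pair). This proves $\Rhc_\infty\subseteq\bigcup_h f_h(\Rhc_\infty)$; the reverse inclusion is the same identity read backwards, prepending a digit $(u_1,v_1)$ to any control sequence realizing $z'$.

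With nonempty compactness and invariance established, uniqueness of the attractor forces $\Rhc_\infty$ to coincide with the fixed point of $\mathcal F_{\rho,\omega}$, which is the assertion. The step I expect to demand the most care is the bookkeeping hidden in the reindexing: one must verify that the shifted angles $\widetilde\Omega_j$ are themselves admissible cumulative angles for the tail controls — equivalently, that the $j$-dependent alphabets $A_j$ of Section 3 shift consistently — so that the tail genuinely returns to $\Rhc_\infty$ rather than to a strictly larger representable set. A secondary point worth stating explicitly is that $|e^{-i(\pi-\omega)}|=1$, so that the similarity ratio is exactly $1/\rho<1$; this is precisely what legitimizes the contraction-mapping argument on the Hausdorff-metric space that underlies the uniqueness half of Hutchinson's theorem.
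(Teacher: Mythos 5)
The paper itself gives no proof of Proposition \ref{pss} --- it defers entirely to the preprint \cite{LL11} --- so the yardstick here is whether your argument is sound, and its skeleton is exactly the expected one: show $\Rhc_\infty$ is nonempty and compact (continuity of the coding map on the Cantor space $\{0,1\}^{\NN}\times\{0,1\}^{\NN}$ with the uniform tail bound $\sum_{j>N}\rho^{-j}$), show it is invariant under the Hutchinson operator by peeling off the first control pair and prepending a pair for the reverse inclusion, and then invoke uniqueness of the attractor, correctly noting that uniqueness holds among \emph{nonempty compact} sets, which is why compactness must be checked first. All of that is executed correctly.

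There is, however, one genuine gap, and it sits precisely at the sentence where you identify the derived maps with $f_2,f_4$ ``verbatim.'' Under the paper's Section 3 definition, $\Omega_j=\omega\sum_{n=1}^{j}v_n$, so your (correct) peeling identity produces the four maps
\begin{equation*}
x\mapsto\frac{1}{\rho}\,e^{-iv_1\omega}\left(u_1+x\right),\qquad (u_1,v_1)\in\{0,1\}^2,
\end{equation*}
i.e.\ the rotating pair carries the factor $e^{-i\omega}$, whereas $f_2$ and $f_4$ in the statement carry $e^{-i(\pi-\omega)}$; these coincide only when $\omega=\pi/2$. So what you have actually proved is that $\Rhc_\infty$ (as defined in Section 3) is the attractor of the IFS with factor $e^{-i\omega}$, not of $\mathcal F_{\rho,\omega}$ as written. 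To be fair, this mismatch is an inconsistency inside the paper itself: Lemma \ref{lsic} (vertices involving $e^{-i2\pi/3}$, $e^{-i4\pi/3}$ for $\omega=2\pi/3$) is consistent with per-step rotation $\omega$, while the proof of Theorem \ref{thintersect} uses per-step rotation $\pi/3=\pi-\omega$ for the same $\omega=2\pi/3$; the proposition is evidently imported from \cite{LL11} under the interior-angle convention. But a complete proof cannot silently equate $e^{-i\omega}$ with $e^{-i(\pi-\omega)}$: you must either state that you prove the proposition with $e^{-i\omega}$ in place of $e^{-i(\pi-\omega)}$ (matching the paper's definition of $\Rhc_\infty$), or adopt and justify the convention $\Omega_j=(\pi-\omega)\sum_{n=1}^{j}v_n$ throughout. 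Incidentally, the bookkeeping you flagged as the delicate point (that the shifted angles $\widetilde\Omega_j$ are admissible for the tail) is actually automatic from $\widetilde\Omega_j=\Omega_{j+1}-v_1\omega$; the angle convention, which you did not flag, is the real issue.
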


\subsection{Reachability}

As we noticed in Remark \ref{rmkA}, the set of points that can be asymptotically reached with full-rotation 
configurations corresponds to the set of representable numbers with a suitable base and with alphabet
$A=\{0,1\}$, in particular 
$$\Rhcfr:=\left\{\sum_{j=1}^\infty \frac{z_j}{\lambda^j}\mid z_j\in\{0,1\}\right\}\subset \Rhc.$$
 This gives access to several results on complex-based representability that can be adapted to our case. Set $\CC_R:=\{z\in\CC\mid |z|\leq R\}$ with $R>0$ and $\omega \in(0,2\pi)$. 
In \cite{KL07} is shown that if $\rho$ is sufficiently close to $1$ then every complex number $z\in\CC_R$ has at least one expansion in base $\lambda=\rho e^{i\omega}$ and with
alphabet $\{0,1\}$, i.e., every point in $\CC_R$ can be reached by a full-rotation configuration. 
Moreover if $\omega=\frac{1}{q}2\pi$, with $q\in\NN$ and $q\geq 3$, and $\rho\leq 2^{1/q}$, then $\Rhcfr$ is a polygon with $2q$ edges if $q$ is odd and $q$ edges otherwise \cite{Lai11}.

We also remark that in \cite{akitop} and \cite{akitop2}  can be found a study of the topological properties of the so-called \emph{fundamental domains} of two-dimensional expansions: our set 
$\Rhcfr$ is a particular case of such domains. We refer to \cite{akitop3} as a survey on this argument. 

\section{How to avoid self-intersecting configurations: a particular case}
In general, the dynamics of the fingers does not prevent self-intersecting configurations (see Figure \ref{fself}) and, clearly, this needs to be avoided in order to keep the physical sense of the model.
In this section we show sufficient conditions to avoid self-intersecting configurations in a particular case, namely when the angle between phalanxes is $\pi/3$, i.e., $\omega=2\pi/3$. Our starting point is next result, whose proof can be found in \cite{LL11}.
\begin{Lemma}\label{lsic}
 For every $\rho>1$ 
\begin{equation}
 \conv(\Rhc_\infty(\rho,2\pi/3))=\conv(\{\mathbf v_1(\rho),\mathbf v_2(\rho),\mathbf v_3(\rho),\mathbf v_4(\rho)\})
\end{equation}
where
\begin{center}
 \begin{tabular}{ll}
$\mathbf v_1(\rho)=\dfrac{1}{\rho-1},$&$\qquad\mathbf v_2(\rho)=\dfrac{e^{-i\frac{2\pi}{3}}}{\rho-1},$\\
$\mathbf v_3(\rho)=\dfrac{e^{-\frac{2\pi}{3}i}}{\rho}+\dfrac{e^{-i\frac{4\pi}{3}}}{\rho(\rho-1)},$&$\qquad\mathbf v_4(\rho)=\dfrac{e^{-i\frac{4\pi}{3}}}{\rho(\rho-1)}.$
 \end{tabular}
\end{center}
\end{Lemma}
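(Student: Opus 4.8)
The plan is to prove the two inclusions separately. Write $P:=\conv(\{\mathbf v_1(\rho),\mathbf v_2(\rho),\mathbf v_3(\rho),\mathbf v_4(\rho)\})$ for the candidate quadrilateral and $C:=\conv(\Rhc_\infty(\rho,2\pi/3))$ for the convex hull we must identify, and use throughout the self-similar description from Proposition \ref{pss}, specialized to $\omega=2\pi/3$ so that $e^{-i(\pi-\omega)}=e^{-i\pi/3}$.

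For the inclusion $P\subseteq C$ I would show that each vertex $\mathbf v_i(\rho)$ is itself a point of $\Rhc_\infty$, by exhibiting explicit control sequences and summing the resulting geometric series. Concretely, $\mathbf v_1$ comes from $u_j\equiv 1,\ v_n\equiv 0$ (so $\Omega_j\equiv 0$); $\mathbf v_2$ from $u_j\equiv 1$ with $v_1=1$ and $v_n=0$ for $n\ge 2$ (so $\Omega_j\equiv 2\pi/3$); $\mathbf v_3$ from $u_j\equiv 1$ with $v_1=v_2=1$ and $v_n=0$ for $n\ge 3$ (so $\Omega_1=2\pi/3$ and $\Omega_j=4\pi/3$ for $j\ge 2$); and $\mathbf v_4$ from the same rotation controls as $\mathbf v_3$ but with $u_1=0$, which suppresses the first term while the invisible rotation encoded by $v_1=v_2=1$ persists. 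In each case the closed form of the series matches the stated expression, so $\{\mathbf v_1,\dots,\mathbf v_4\}\subseteq\Rhc_\infty$ and hence $P=\conv(\{\mathbf v_i\})\subseteq C$.

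For the reverse inclusion $C\subseteq P$ I would verify that $P$ is invariant under the Hutchinson operator $\mathcal F_{\rho,2\pi/3}$, i.e.\ that $f_h(P)\subseteq P$ for $h=1,\dots,4$. Since each $f_h$ is affine, $f_h(P)=\conv(\{f_h(\mathbf v_1),\dots,f_h(\mathbf v_4)\})$, so it is enough to check that the sixteen image points $f_h(\mathbf v_i)$ lie in $P$, which amounts to testing the finitely many half-plane inequalities cutting out the quadrilateral. Once $\mathcal F(P)\subseteq P$ is established, the nested sequence $\mathcal F^n(P)$ decreases to the attractor of $\mathcal F$; by Proposition \ref{pss} that attractor is exactly $\Rhc_\infty$, whence $\Rhc_\infty\subseteq P$, and convexity of $P$ yields $C=\conv(\Rhc_\infty)\subseteq P$. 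Combining the two inclusions gives the claimed equality.

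The main obstacle is the invariance verification. I expect the cleanest route is to first write down the four edges of $P$ explicitly as the linear functionals maximized at consecutive vertices, and then check edge by edge that every image $f_h(\mathbf v_i)$ satisfies the corresponding inequality; because the maps contract by $1/\rho$ and rotate by the fixed angles $0$ and $\pi/3$, these inequalities should come out uniform in $\rho>1$, so a single computation covers all scaling factors at once. A secondary point to keep straight is the bookkeeping of the invisible rotation in the expansion of $\mathbf v_4$, where $u_1=0$ while $v_1=v_2=1$, to be certain the exhibited control sequence genuinely produces the stated value.
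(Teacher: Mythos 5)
The paper itself contains no proof of this lemma --- it is quoted from the external preprint \cite{LL11} --- so your attempt has to be judged on its own merits. Your first inclusion is correct: the four control sequences you exhibit do realize $\mathbf v_1,\dots,\mathbf v_4$ as points of $\Rhc_\infty$, \emph{provided} the rotations are by multiples of $\omega=2\pi/3$, which is exactly the convention of the paper's definition of $\Rhc_\infty$ (phalanx directions $e^{-i\Omega_j}$ with $\Omega_j=\sum_{n\le j}v_n\omega$), and is the only convention under which the stated vertex values can be right. Your plan for the converse inclusion --- show the quadrilateral $P$ is invariant under the Hutchinson operator, conclude that the attractor, hence $\Rhc_\infty$, lies in $P$, then take convex hulls --- is also the right idea, and the nested-iterates argument you sketch for ``invariance implies attractor inside $P$'' is sound.

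The genuine gap is in \emph{which} IFS you invoke. You commit to Proposition \ref{pss} ``specialized to $\omega=2\pi/3$ so that $e^{-i(\pi-\omega)}=e^{-i\pi/3}$'', i.e.\ to maps rotating by $\pi/3$. That IFS is incompatible with the convention you (correctly) used in the first half: its attractor consists of sums $\sum_j u_j\rho^{-j}e^{-iM_j\pi/3}$ with $M_j=\sum_{n\le j}v_n$, a set which does not even contain your $\mathbf v_2=e^{-i2\pi/3}/(\rho-1)$ (maximal modulus forces all terms aligned, hence $M_1=2$, impossible). Worse, the sixteen-point check you propose fails outright for that IFS: $f_4(\mathbf v_1)=\frac{e^{-i\pi/3}}{\rho}\bigl(\frac{1}{\rho-1}+1\bigr)=\frac{e^{-i\pi/3}}{\rho-1}$, and while every $\mathbf v_i$ satisfies the edge inequality $x-\sqrt{3}\,y\le\frac{1}{\rho-1}$ (with equality at $\mathbf v_1,\mathbf v_2$), this image gives $\frac{2}{\rho-1}$, so $f_4(\mathbf v_1)\notin P$ for every $\rho>1$. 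The resolution is that Proposition \ref{pss} as printed is inconsistent with the paper's own definition of $\Rhc_\infty$: splitting off the first term of the series gives $x=\frac{e^{-iv_1\omega}}{\rho}(x'+u_1)$, so the IFS matching that definition rotates by $e^{-i\omega}=e^{-i2\pi/3}$, not by $e^{-i(\pi-\omega)}$. With the corrected maps $g_{(u,v)}(z)=\frac{e^{-i2\pi v/3}}{\rho}(z+u)$ your plan does go through, and pleasantly so: $\mathbf v_1$ is the fixed point of $g_{(1,0)}$, $\mathbf v_2=g_{(1,1)}(\mathbf v_1)$, $\mathbf v_3=g_{(1,1)}(\mathbf v_2)$, $\mathbf v_4=g_{(0,1)}(\mathbf v_2)$, $0\in P$, and the remaining images satisfy the four edge inequalities of $P$. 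A final caveat: those inequalities are not ``uniform in $\rho$'' as you hope --- both the vertices and the maps depend on $\rho$ --- so the checks must actually be carried out as (elementary, but $\rho$-dependent) inequalities valid for all $\rho>1$; your proposal leaves this, the crux of the second inclusion, unexecuted.
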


\begin{figure}[ht]\begin{center}
               \includegraphics[scale=0.3]{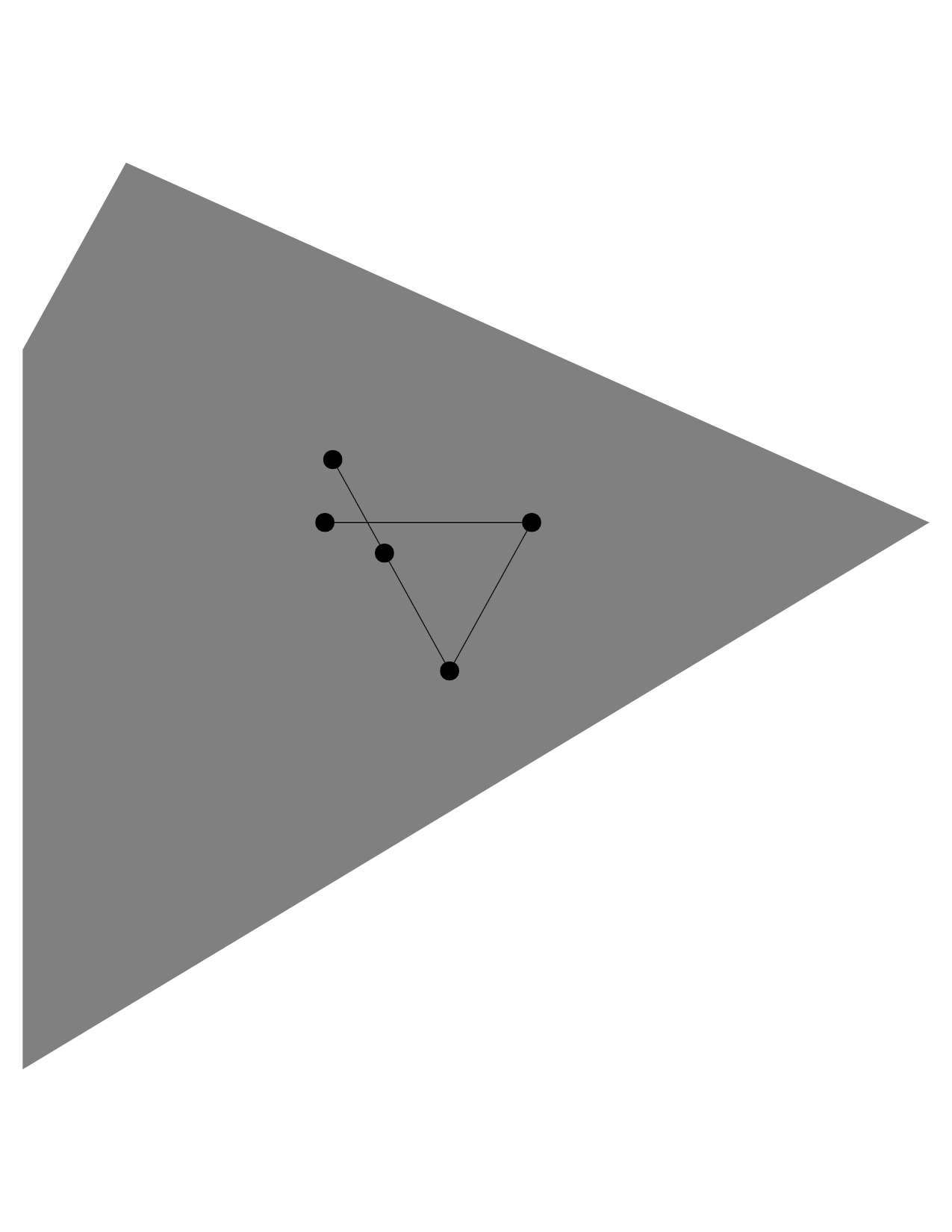}
              \end{center}
\caption{A self-intersecting configuration with $\rho=1.5$ and $\omega=\pi/3$\label{fself}}
\end{figure}

\begin{Theorem}\label{thintersect}
   If the angle between the phalanxes is $\pi/3$ and if the ratio is $\rho\geq2$, then all the configurations are not  self-intersecting.
\end{Theorem}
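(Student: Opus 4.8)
The plan is to exploit the self-similarity of the configurations. A configuration is the polygonal path $x_0,x_1,\dots$, and it is self-intersecting when two of its phalanx segments meet outside a common endpoint. Factoring the first term out of the general equation, every configuration splits into the first phalanx $[x_0,x_1]$ together with a tail $(x_k)_{k\ge1}$ which, after reindexing the controls, is the image of a configuration under the similarity $\sigma(z)=x_1+\frac{e^{-i\Omega_1}}{\rho}z$ of ratio $1/\rho\le 1/2$; this is precisely the contraction structure of Proposition \ref{pss}. Since similarities preserve simplicity, an induction on the number of phalanxes reduces the statement to a single claim: the first phalanx meets the tail only at the shared junction $x_1$.

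To make this tractable I would localize the tail. By Lemma \ref{lsic} the tail is contained in the quadrilateral $\sigma(Q)$, where $Q=\conv\{\mathbf v_1(\rho),\dots,\mathbf v_4(\rho)\}$. Rotating the whole picture, I may assume the first step is a straight extension, so that $x_0=0$ and $x_1=1/\rho$ lie on the positive real axis and the tail lies in $f_3(Q)=\frac1\rho(Q+1)$, whose vertices are the images $f_3(\mathbf v_i(\rho))$ (note that $\mathbf v_1(\rho)$ is the fixed point of $f_3$). It then suffices to prove that the tail does not meet the open segment $(0,1/\rho)$.

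This is the heart of the matter, and where $\rho\ge2$ enters. The naive idea of separating the two convex hulls fails: a direct computation with the vertices of Lemma \ref{lsic} shows that $0$ lies in the interior of $Q$ and that $f_3(Q)$ genuinely overlaps the line carrying the first phalanx. Instead of separating hulls I would track on which side of that line the tail sits. The key sublemma is that, for $\rho\ge2$, no junction of the inner configuration is a negative real number. Indeed, a junction $w=\sum_j \frac{u_j}{\rho^j}e^{-i\Omega_j}$ is real exactly when the total $\rho$-ary weights $W_{2\pi/3}$ and $W_{4\pi/3}$ carried by the two non-real cube roots of unity $e^{-i2\pi/3}$ and $e^{-i4\pi/3}$ coincide; since these are finite sums of distinct powers $\rho^{-j}$ with coefficients in $\{0,1\}$ over disjoint index sets, the uniqueness of finite base-$\rho$ expansions (valid precisely for $\rho\ge2$) forces $W_{2\pi/3}=W_{4\pi/3}=0$, so the real part reduces to $W_0\in[0,\mathbf v_1(\rho)]$. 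Hence every real junction of the inner configuration is nonnegative, its image under $f_3$ lies outside $(0,1/\rho)$, and the tail meets the real axis only at $x_1$. Propagating this through $f_3$ and the induction of the first paragraph yields that every configuration is simple.

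The main obstacle I anticipate is upgrading this junction statement to a segment statement: one must also exclude that an individual tail phalanx crosses the open first phalanx transversally without any junction landing on it. I would handle this by the same side analysis, showing that each tail phalanx has both endpoints in the same closed half-plane bounded by the first phalanx's supporting line, so that the short tail phalanxes (of length at most $\rho^{-2}\le 1/4$) cannot straddle it inside $(0,1/\rho)$. Making this last step uniform over all control sequences, using the explicit geometry of $f_3(Q)$ furnished by Lemma \ref{lsic}, is the part requiring the most care.
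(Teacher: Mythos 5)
Your proposal is correct, and its decisive step is genuinely different from the paper's. The opening reduction is the same: the paper also invokes self-similarity (Proposition \ref{pss}) to normalize $x_0=0$, $x_1=1/\rho$, and to reduce the theorem to ``no later phalanx meets the first one.'' From there, however, the paper stays with convex hulls, but not with $f_3(Q)$: it anchors at the first junction $x_J\neq x_1$, writes $x_J$ as $x_1$ plus a step of length $\rho^{-J}$ in one of the three admissible directions ($N\in\{0,1,2\}$ accumulated rotations), and checks by direct computation on the vertices of Lemma \ref{lsic} that $\conv(\Rhc_\infty(x_J))$, a copy of $\conv(\Rhc_\infty)$ scaled by $\rho^{-J}$ and rotated accordingly, misses the segment $[0,1/\rho]$ when $\rho>2$ (real part $>1/\rho$ for $N=0$; imaginary part of one strict sign for $N=1,2$); the case $\rho=2$ is handled by the separate remark that the hull then touches the segment only at points attained by infinite full-extension configurations, which finite configurations never realize. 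Your observation that the cruder hull $f_3(Q)$ cannot be separated from the phalanx is correct (at $\rho=2$ the point $f_3(\mathbf v_3(2))=1/4$ lies on the open phalanx), and it is exactly why the paper needs the anchoring at $x_J$ and the case split on $N$. Your replacement --- real junctions force $W_{2\pi/3}=W_{4\pi/3}$ with disjoint supports, hence $W_{2\pi/3}=W_{4\pi/3}=0$ by uniqueness of finite $\{0,1\}$-expansions in base $\rho\geq2$ --- is sound, and it buys two things: a uniform treatment of the boundary case $\rho=2$ (finite binary expansions are unique, so no limiting argument is needed), and a transparent arithmetic explanation of the threshold $2$, matching the golden-ratio-type identities that create self-intersections below it. Its cost is the segment step you flag, which the hull method gets essentially for free since segments between junctions lie in the hull automatically.

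That flagged step does go through by a quantitative form of your own estimate, so there is no real gap: if $\Im x_k\neq 0$ then $\frac{2}{\sqrt{3}}\,|\Im x_k|=\bigl|\sum_{j\le k}\epsilon_j\rho^{-j}\bigr|$ with $\epsilon_j\in\{-1,0,1\}$, and letting $j_0$ be the first nonzero index, for $\rho\geq 2$ this is at least $\rho^{-j_0}-\sum_{j_0<j\le k}\rho^{-j}\geq \rho^{-k}/(\rho-1)>\rho^{-(k+1)}$, which strictly exceeds the largest possible imaginary increment $\frac{\sqrt{3}}{2}\rho^{-(k+1)}$ of the next phalanx. Hence consecutive junctions never lie on strictly opposite sides of the real axis, and a segment with both endpoints in one closed half-plane meets the axis only at endpoints lying on it (or is contained in it); combined with your junction lemma this finishes the proof, and your phalanx-length remark ($\rho^{-2}\leq 1/4$) is unnecessary. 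Two quibbles: uniqueness of finite expansions is not valid ``precisely'' for $\rho\geq 2$, since it also holds for any $\rho<2$ that is not a root of a polynomial with coefficients in $\{-1,0,1\}$; all you need, and all that is true, is that it holds for every $\rho\geq2$. Also, the conclusion of your third paragraph (``the tail meets the real axis only at $x_1$'') is only licensed after the segment step, as you effectively acknowledge.
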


\begin{proof}
 A configuration is a finite subsequence of junctions of a finger on the complex plane and, consequently, every configuration is a scaled and rotated copy of a 
configuration starting from the initial junction
$x_0$ and with the first extended phalanx parallel to the real axis (see Proposition \ref{pss}). 
Therefore we may consider without loss of generality only configurations of the form $(x_j)_{0\leq j<\infty}$ with $x_0=0$ and with $x_1=\frac{1}{\rho}$
and it suffices to prove that any subsequent phalanx, namely the segment joining two consecutive junctions, does not intersect the first one. 
Let $J$ be the smallest integer such that $x_J\not=x_1$. We have
$$x_J=x_1+\frac{1}{\rho^J}e^{-i\sum_{n=1}^J v_n \pi/3}=x_1+\frac{1}{\rho^J}e^{-iN\pi/3}$$
for some $N\in\{0,1,2\}$. The asymptotic reachable workspace from $x_J$ is hence the following
$$\Rhc_\infty(x_J)=x_J+\frac{1}{\rho^J}e^{-iN\pi/3}\Rhc_\infty$$
and, in view of Lemma \ref{lsic}, 
$$\conv(\Rhc_\infty(x_J))=\conv(\{x_J+\frac{1}{\rho^J}e^{-iN\pi/3}\mathbf v_j(\rho)\mid j=1,\dots,4\}.$$
By a direct computation, we have that the intersection between $\conv(\Rhc_\infty(x_J))$ and the first phalanx $\{x_0+tx_1\mid t\in[0,1]\}$ is empty if $\rho>2$. 
In particular, if $\rho>2$ and if $N=0$ then the real part of every reachable point is greater than $\dfrac{1}{\rho}$, namely greater than one of the endpoints of the phalanx, 
if $N=1$ or $N=2$ then the imaginary part of any reachable point is respectively strictly smaller or greater than $0$.  When $\rho=2$ only infinite full-extension configurations
intersect the first phalanx. Since configurations are finite sequences, this proves the ``if part'' of the theorem.
 If $\rho<2$  then a direct computation shows that the configuration 
generated by the control vector $(u_j)_{j=1}^J$ and $(v_j)_{j=1}^J$ with $u_j=1$ for every $j=1,\dots,J$ and $v_10$, $v_2=v_3=1$ and $v_j=0$ for every $j=3,\dots,J$ is self-intersecting for every sufficiently 
large $J$ (see Figure \ref{fself}).
\end{proof}
\section{Form closure properties}\label{sectiongrasp}
 Let $\{\mathbf c_j\}$ with $j\in J\subset\NN$ be a set of contact points (namely a set of tangency
points between the finger and the surface of an object $\mathcal O$), let $\mathbf n_j$ be the normal vector to the boundary of $\mathcal O$ at $\mathbf c_j$ and let $\mathbf l$ a vector describing the linear velocity of $\mathcal O$ (and consequently
the linear velocity of any $\mathbf c_j$). Given the contact constraints
\begin{equation}\label{fc}
\mathbf  n_j^T\cdot \mathbf l \geq 0 \quad j\in J 
\end{equation}
we consider the following partial form closure condition: 
\begin{equation}\label{c}\tag{C}
 \text{\em there exists at most one unit vector $\mathbf l\in\RR^2$ satisfying the contact constraints (\ref{fc}).}
\end{equation}
In other words, if a configuration and an object satisfy (C) then the object cannot move in any the direction different from $\mathbf l$.  
We also consider a stronger version of (C)
\begin{equation}\label{sc}\tag{SC}
 \text{\em for every $\mathbf l\in\RR^2$ at least one of the contact constraints (\ref{fc}) is violated.}
\end{equation}

In our model we assume to actively control the modulus $\alpha_j\in[0,1]$ of some internal (squeezing) frictionless contact forces $\mathbf f_j$ so that 
\begin{equation}\label{force}
\mathbf f_j=-\alpha_j \mathbf n_j\quad \text{for every}~j\in J.
\end{equation}
setting $\mathbf w_j:=(\mathbf f_j,\mathbf m_j)$, where $\mathbf m_j$ is the momentum of $\mathbf f_j$, we also look for configurations satisfying the following equilibrium condition
\begin{equation}\label{E}\tag{E}
\sum_{j\in J} \mathbf w_j=0. 
\end{equation}
%

Now, let $\omega\in(0,\pi)$ be the angle of rotation of a finger and let $\Jo$ be the smallest integer such that
\begin{equation}\label{g1}
 \omega(\Jo-1)<\pi\leq \omega \Jo \quad \mod 2\pi
\end{equation}

\begin{Example}
 If $\omega=2\pi/3$ then $\Jo=2$. 
\end{Example}

Consider the configuration whose motion controls satisfy for every $0\leq j\leq J$, for some $J\geq\Jo$,
\begin{equation}\label{controls}
u_j\dot=\begin{cases}
       1 & \text{ if } j=1,\Jo,\Jo+1;\\
       0 & \text{otherwise;}
      \end{cases}
\qquad
v_j\dot=\begin{cases} 1 &\text{ if } j\leq \Jo+1;\\
     0 &\text{ otherwise.}
    \end{cases}
\end{equation}

\begin{Theorem}\label{thgrasp}
Let  $\omega\in(0,\pi)$ and consider the configuration corresponding to the controls defined in (\ref{controls}). 
Then there exists a circle $\mathcal O$ sharing with the finger three contact points $\mathbf c_1,\mathbf c_{\Jo}$ and $\mathbf c_{\Jo+1}$ and satisfying (C) if and only if 
\begin{equation}\label{claim}
 \rho< 2+\tan(\omega(\Jo-1)/2)\cot(\omega/2).
\end{equation}
If moreover $\omega\Jo\not=\pi$ then (\ref{claim}) also implies (SC).
\end{Theorem}

\begin{figure}[ht]
 \begin{center}
\subfloat[$\omega=2\pi/5$]{
\includegraphics[scale=0.5]{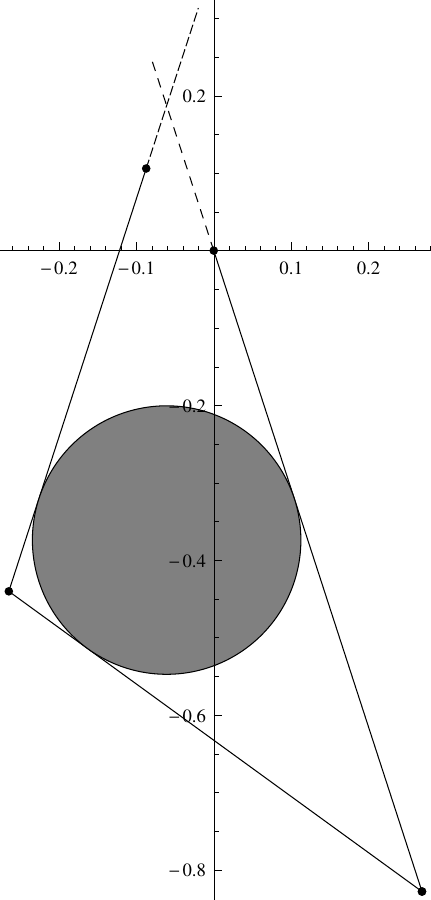}
}\hskip1.5cm
\subfloat[$\omega=2\pi/8$]{
\includegraphics[scale=0.5]{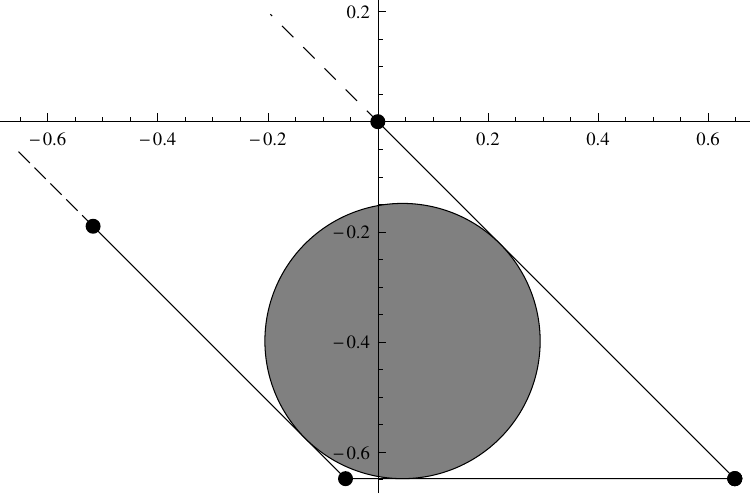}
}
 \end{center}
\caption{\label{fw54} Configurations associated to the controls defined in (\ref{controls}) and related inscribed circles $\mathcal O$.}
\end{figure}
\begin{proof}
  Let $J\geq J_\omega$ and note that (\ref{controls}) implies that the extended phalanxes of the resulting configuration are the first one and every phalanx between the $\Jo$-th and $J$-th ones.
 Moreover all phalanxes between the $\Jo+1$-th and the last belong to the same line, because their rotation controls are constantly $0$. 
Hence we may construct a circle $\mathcal O$ tangent to the prolongations of these phalanxes. In particular, 
in Figure \ref{fw54} are represented two possible scenarios: if $\Jo\omega\not=\pi$ (see Figure \ref{fw54}.A) then by construction the prolongations of the phalanxes form a triangle and we set
 $\mathcal O$ as the inscribed circle of this triangle. Note that the tangency points $\mathbf c_1,\mathbf c_{\Jo}$ and $\mathbf c_{\Jo+1}$ are indeed contact points (namely they belong to the phalanxes and
not to their prolongations) then by construction (SC) is satisfied. If otherwise $\Jo\omega=\pi$, then every extended phalanx but the second one is parallel to the first phalanx, 
see Figure \ref{fw54}.B. In this case we set $\mathcal O$ as the (unique) circle tangent to three distinct extended phalanxes: in particular, 
it is the circle inscribed in the rhombus whose edges are as long as the $\Jo$-th phalanx and whose internal angles are $\omega$ and $\pi-\omega$. Note that in this case the only allowed
 direction $\mathbf l$ (namely the only unit vector satisfying (\ref{fc})) is the one parallel to the first phalanx and with positive scalar product with the $J$-th phalanx.  
Also in this case call the resulting tangent points $\mathbf c_1$, $\mathbf c_{\Jo}$ and $\mathbf c_{\Jo+1}$.

It is left to show that (\ref{claim}) holds if and only if  $\mathbf c_1$, $\mathbf c_{\Jo}$ and $\mathbf c_{\Jo+1}$
 respectively belong to the first, to the $\Jo$-th and to any subsequent phalanx.  
Remark that the $\Jo$-th phalanx shares both its endpoints with other phalanxes, hence the prolongations we are
considering only refer to the first and to the last phalanxes: by construction $\mathbf c_{\Jo}$ is always tangent to the $\Jo$-th phalanx. In particular we have that the distance between $\mathbf c_{\Jo}$ and the $\Jo$-th 
junction is lower than the length of the $\Jo$-th phalanx:
\begin{equation}\label{cj}
\mid x_{1}-\mathbf c_{\Jo}\mid \leq \frac{1}{\rho^{\Jo}}.
\end{equation}
 Now, $\mathbf c_1$ belongs to the first phalanx if and only if
\begin{equation}\label{c01}
 \mid x_1-\mathbf c_1\mid \leq \frac{1}{\rho}
\end{equation}
where $x_1$ is the position of the first junction and $\frac{1}{\rho}$ is the length of the first phalanx. Similarly $\mathbf c_{\Jo+1}$ belongs to a phalanx if and only if 
\begin{equation}\label{c02}
 \mid x_{\Jo}-\mathbf c_{\Jo+1}\mid < \frac{1}{\rho^{\Jo}(\rho-1)}
\end{equation}
indeed the right-hand side of the above inequality is the upper bound of the length of a finite sequence of adjacent phalanxes. 
A classical result in plane geometry states that if we consider two consecutive
edges of a polygon admitting an inscribed circle, then the distances between the related tangent points and the common vertex are equal (see Figure \ref{plane}).

\begin{figure}[ht]
\begin{center}
 \begin{picture}(100,230)
\put(0,0){\includegraphics[scale=0.5]{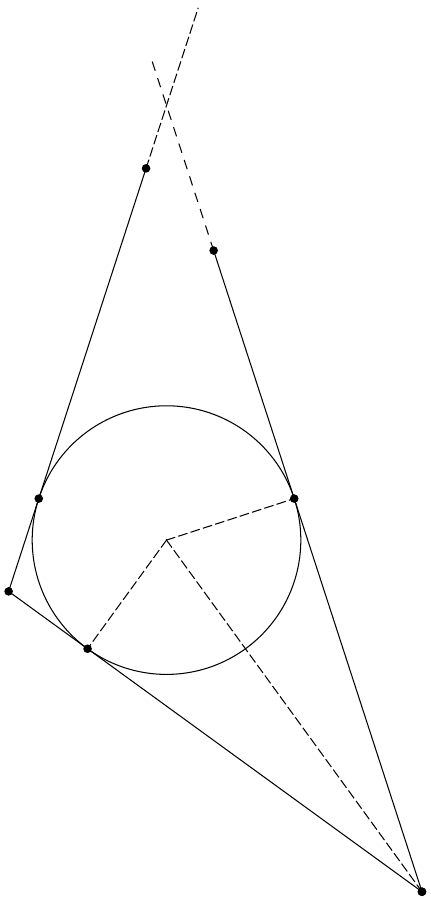}}
\put(105,0){$x_1$}
\put(-15,70){$x_{\Jo}$}
\put(15,50){$\mathbf c_{\Jo}$}
\put(30,85){$C$}
\put(72,95){$\mathbf c_1$}
\put(55,155){$x_0$}
\put(5,175){$x_{\Jo+1}$}
\put(-20,100){$\mathbf c_{\Jo+1}$}
 \end{picture}
\end{center}
\caption{\label{plane}The triangle  with vertices $C,~\mathbf c_{1}$ and $x_{1}$ and the 
triangle with vertices $C,~\mathbf c_{\Jo}$ and $x_{1}$ are equal, because they are right triangles with a common edge and with two equal edges (the ray of the inscribed circle). Hence
$\mid x_{\Jo}-\mathbf c_{\Jo}\mid = \mid x_{\Jo}-\mathbf c_{\Jo+1}\mid$. This also implies that the edge with endpoints $x_{1}$ and $C$ is the bisector of the angle in $x_{1}$.}
\end{figure}

 In our case this implies, together with
(\ref{cj}),
\begin{equation}\label{c001}
 \mid x_1-\mathbf c_1\mid = \mid x_1-\mathbf c_{\Jo}\mid < \frac{1}{\rho}.
\end{equation}
Similarly we may rewrite (\ref{c02}) as follows
\begin{equation}\label{c002}
 \mid x_{\Jo}-\mathbf c_{\Jo}\mid < \frac{1}{\rho^{\Jo}(\rho-1)}.
\end{equation}
Now, the angle between the first two active phalanxes is $\pi-(\Jo-1)\omega$; therefore if we call $r$ the ray of the inscribed circle we have
\begin{equation}\label{comp1}
 \mid x_{\Jo}-\mathbf c_{\Jo}\mid = \frac{1}{\rho^{\Jo}}-\mid x_{1}-\mathbf c_{\Jo}\mid = \frac{1}{\rho^{\Jo}}-r \mid \tan((\pi-(\Jo-1)\omega)/2)\mid 
\end{equation}
(see Figure \ref{plane}).
Since the angle in the junction $x_{\Jo}$ is $\pi-\omega$, we also have
\begin{equation}\label{comp2}
 \mid x_{\Jo}-\mathbf c_{\Jo}\mid= r \mid\tan((\pi-\omega)/2)\mid.
\end{equation}
By a comparison between (\ref{comp1}) and (\ref{comp2}) we deduce
\begin{equation*}
 \mid x_{\Jo}-\mathbf c_{\Jo}\mid= \frac{1}{\rho^{\Jo}}\frac{\tan(\omega/2)}{\tan(\omega/2)+\tan((\Jo-1)\omega/2)}.
\end{equation*}
and, finally, the equivalence between (\ref{c02}) and (\ref{claim}).
\end{proof}

\begin{Example}\label{exgrasp}
If $\omega=2\pi/3$ then (\ref{claim}) holds if $\rho<3$.\\
If $\omega=2\pi/5$ then (\ref{claim}) holds if  $\rho<2G+3$, where $G=(1+\sqrt{5})/2$ is the Golden Ratio.
\end{Example}

\begin{Theorem}\label{kineto}
 Let $\mathcal O$ be like in Theorem \ref{thgrasp}. Then there exist $\alpha_1,\alpha_{\Jo}$ and $\alpha_{\Jo+1}\in[0,1]$ such that the equilibrium condition (E) is satisfied.
\end{Theorem}
\begin{Remark}
 Theorem \ref{kineto} straightforward follows by kineto-static duality, ensuring that (SC) is equivalent to the existence of 
a set of strictly compressive, normal, frinctionless contact forces preserving the equilibrium, see for instance \cite{duality}, Part D, Chapter 28. 
However we give an explicit proof of above result in order to keep the present paper as self-contained as possible.
\end{Remark}

\begin{proof}
 In the proof of Theorem \ref{thgrasp} we constructed $\mathcal O$ so that the contact with the finger involves the first, the $\Jo$-th phalanx and
a particular subsequent phalanx. Recall that we defined $\mathbf n_1$, $\mathbf n_{\Jo}$ and $\mathbf n_{\Jo+1}$ as the external normal versors to the boundary of the circle in the contact points. 
Since the contact phalanxes are tangent to the circle, $\mathbf n_1$, $\mathbf n_{\Jo}$ and $\mathbf n_{\Jo+1}$ are also normal to the first, the $\Jo$-th and to the $\Jo+1$-th phalanx.
 This allows us to explicitly determine $\mathbf n_1$, $\mathbf n_{\Jo}$ and $\mathbf n_{\Jo+1}$, in particular we may assume without loss of generality the finger to belong to the $xy$-plane in $\RR^3$ 
and get
\begin{equation}
\begin{split}
&\mathbf n_1=(\Re(z_1),\Im(z_1),0) \quad\text{ where } z_1=-e^{i(-\omega+\pi/2)};~ \\
&\mathbf n_{\Jo} =(\Re(z_{\Jo}),\Im(z_{\Jo}),0) \quad\text{ where } z_{\Jo}=-e^{i(-\Jo\omega+\pi/2)};~ \\
&\mathbf n_{\Jo+1}=(\Re(z_{\Jo+1}),\Im(z_{\Jo+1}),0) \quad\text{ where } z_{\Jo+1}=e^{i(-(\Jo+1)\omega+\pi/2)}.  
\end{split}
\end{equation}
so that 
\begin{equation}
\mathbf f_j=-\alpha_j \mathbf n_j; \quad j\in\{1,\Jo,\Jo+1\}.
\end{equation}
with $\alpha_1,\alpha_{\Jo},\alpha_{\Jo+1}\in[0,1]$. We call $C$ the center of the circle and we assume it to coincide with its barycenter, so that the resulting moments are
$$\mathbf m_j=(\mathbf c_j-C)\times \mathbf f_j;~\quad j\in \{1,\Jo,\Jo+1\}$$
We split (E) in the following conditions
\begin{equation}\label{feq}
 \mathbf f_1+\mathbf f_{\Jo}+\mathbf f_{\Jo}=0
\end{equation}
and
\begin{equation}\label{meq}
 \mathbf m_1+\mathbf m_{\Jo}+\mathbf m_{\Jo}=0
\end{equation}
Now, (\ref{feq}) can be set in the complex plane, and in particular we obtain the complex equation
\begin{equation*}
 \alpha_1 e^{i(-\omega+\pi/2)} +\alpha_{\Jo}e^{i(-\Jo\omega+\pi/2)}+\alpha_{\Jo+1}e^{i(-(\Jo+1)\omega+\pi/2)}=0
\end{equation*}
whose parametric solutions are
\begin{equation}
 \begin{cases}
  &\displaystyle\alpha_1(t)=t;\\\\
  &\displaystyle\alpha_{\Jo}(t)=-t \frac{\sin(\omega \Jo)}{\sin\omega};\\\\
  &\displaystyle\alpha_{\Jo+1}(t)= t \frac{\sin((\Jo-1)\omega)}{\sin\omega};\\
 \end{cases}
\end{equation}
with $t\in\RR$.
Setting 
$$t=\begin{cases}
     1 & \quad \text{ if } \Jo\omega=\pi;\\
     \displaystyle\min\left\{1,-\frac{\sin \omega}{\sin(\Jo\omega)},\frac{\sin \omega}{\sin((\Jo-1)\omega)}\right\} & \quad \text{ otherwise},  
    \end{cases}
$$
we have by, the definition of $\Jo$ and by the assumption $\omega\in(0,\pi)$,
 the corresponding solutions $\alpha_1,\alpha_{\Jo}$ and $\alpha_{\Jo+1}$ to belong to $[0,1]$ and that they also satisfy (\ref{meq}).
\end{proof}

\begin{figure}[ht]
\captionsetup[subfloat]{font=small,margin=0pt,labelfont={sc},labelsep=newline,justification=centering}
\begin{center}
\subfloat[\tiny $\omega=\pi/3$,\newline\noindent $|\mathbf f_1|=|\mathbf f_{\Jo}| =|\mathbf f_{\Jo+1}|=1$]{
 \begin{picture}(100,230)
\put(0,0){\includegraphics[scale=0.2]{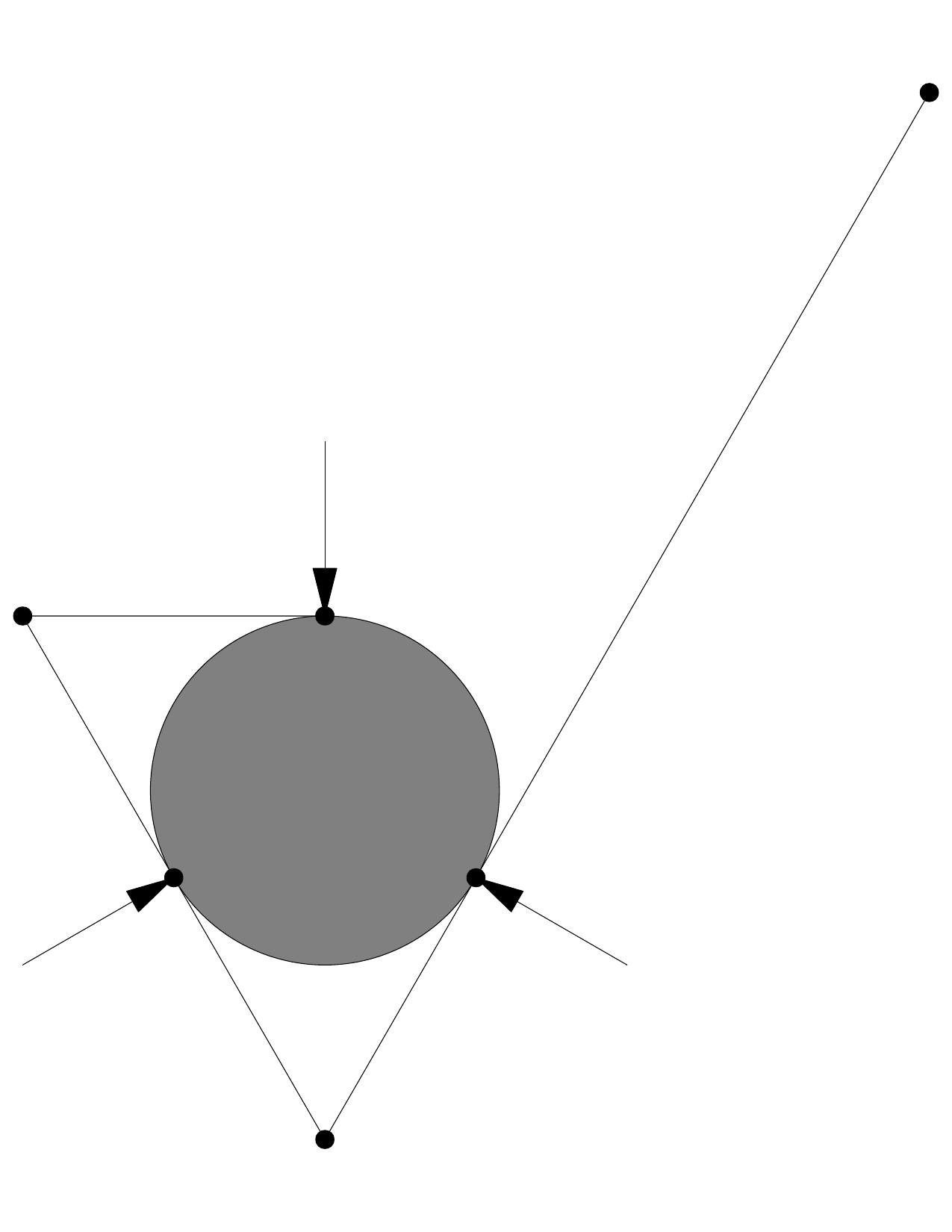}}
\put(10,95){$\mathbf f_{\Jo+1}$}
\put(0,45){$\mathbf f_{\Jo}$}
\put(70,45){$\mathbf f_{1}$}
\put(72,95){$C_1$}
\put(55,155){$x_0$}
\put(5,175){$x_{\Jo+1}$}
\put(-20,100){$C_{\Jo+1}$}
 \end{picture}
}\hskip0.5cm
\subfloat[\tiny $\qquad\omega=\pi/4$, \newline $|\mathbf f_1| =|\mathbf f_{\Jo+1}|=1$, $|\mathbf f_{\Jo}|=0$]{ \begin{picture}(100,230)
\put(0,0){\includegraphics[scale=0.2]{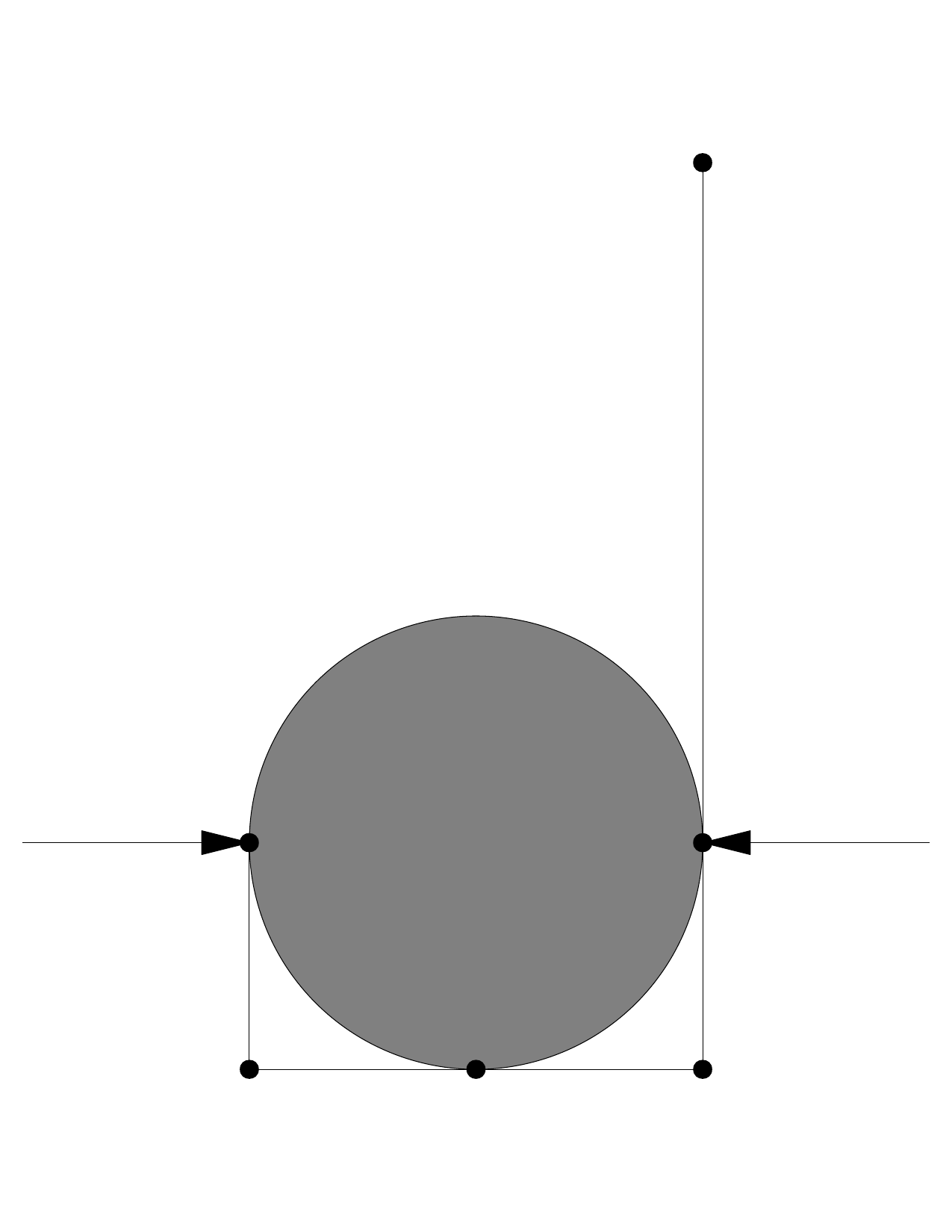}}
\put(95,55){$\mathbf f_1$}
\put(2,55){$\mathbf f_{\Jo+1}$}
 \end{picture}\hskip0.5cm}
 \subfloat[\tiny $\qquad\omega=\pi/5$, \newline $|\mathbf f_1|=1$, $|\mathbf f_{\Jo}| =|\mathbf f_{\Jo+1}|=1/G$, where $G$ is the Golden Ratio. ]
{\begin{picture}(100,230)
\put(0,0){\includegraphics[scale=0.2]{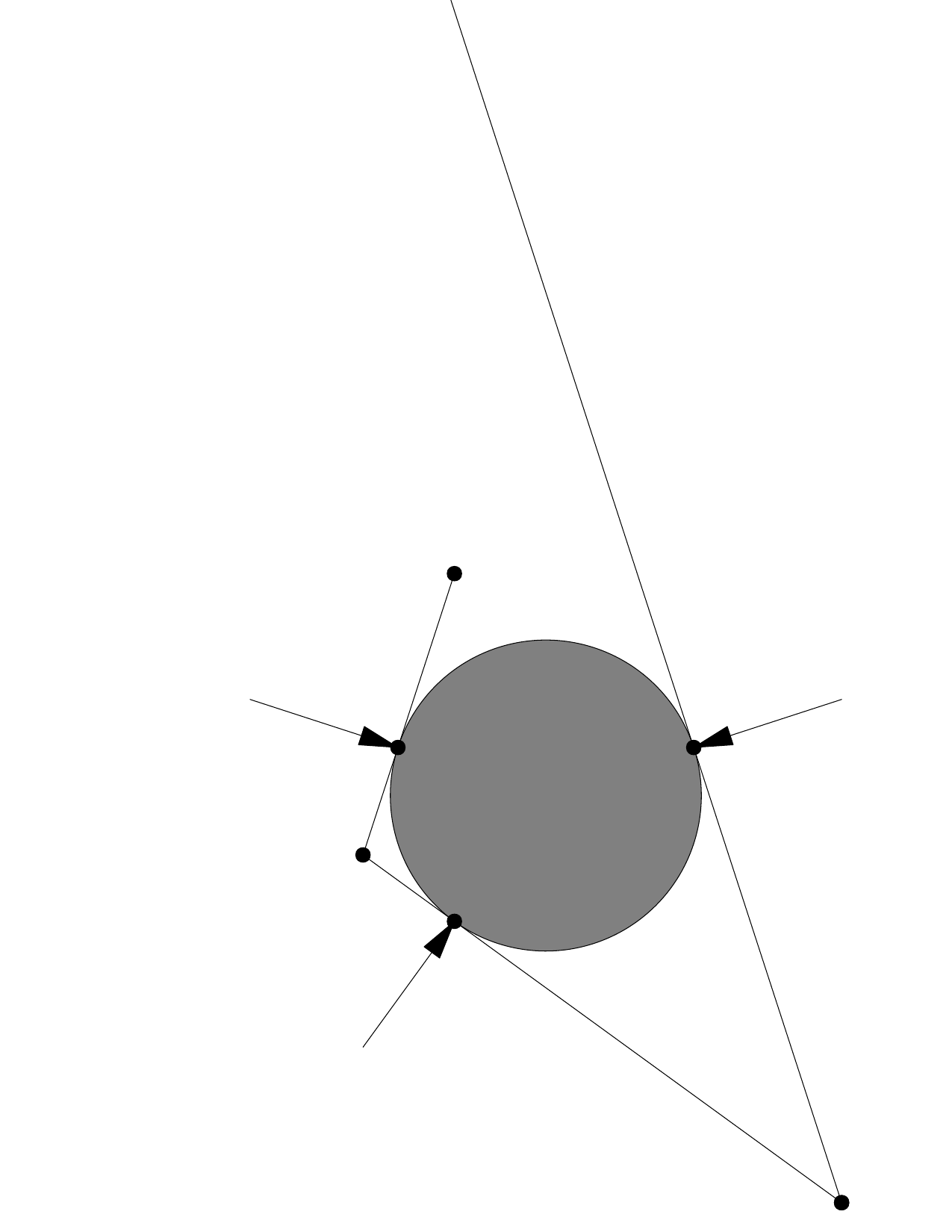}}
\put(38,38){$\mathbf f_{\Jo}$}
\put(90,70){$\mathbf f_1$}
\put(20,75){$\mathbf f_{\Jo+1}$}
 \end{picture}}
\end{center}
\caption{\label{figFor}}
 \end{figure}
 \subsection{Form closure and self-similarity}
In Section \ref{sself} we recalled that the complex reachable workspace $\Rhc$ is the attractor the IFS $\mathcal F_{\rho,\omega}$ defined in (\ref{mathf}). This implies that every reachable point can be obtained by an appropriate concatenation of the linear maps:
\begin{center}
\begin{equation}\label{ff}
\begin{tabular}{ll}
$ f_1: x\mapsto \frac{1}{\rho}x \qquad$ & $f_2: x\mapsto \frac{e^{-i(\pi-\omega)}}{\rho}x$\\
 $f_3: x\mapsto \frac{1}{\rho}(x+1) \qquad$ & $f_4: x\mapsto \frac{e^{-i(\pi-\omega)}}{\rho}(x+1)$.
\end{tabular}
\end{equation}
\end{center}
To better understand the relation between this IFS and the control vectors, we introduce the map $d:\{0,1\}\times\{0,1\}\to\{1,2,3,4\}$ such that
\begin{center}
\begin{tabular}{ll} 
 $d(0,0)=1$&$d(0,1)=2$\\
 $d(1,0)=3$&$d(1,1)=4$\\
\end{tabular}
\end{center}
and, fixing the motion control vectors $(u_j)_{j=1}^J$ and $(v_j)_{j=1}^J$, we define the index sequence
$$d_j:=d(u_j,v_j)$$
for every $j=1,\dots,J$. Then the reachable point
$$x_J:=\sum_{j=1}^J\frac{u_j}{\rho^j}e^{-i\sum_{n=1}^j v_n \omega}$$
also satisfies the relation
\begin{equation}
 x_J=f_{d_J}\circ f_{d_{J-1}}\circ \cdots\circ f_{d_1}(0).
\end{equation}
Since $f_1,\dots,f_4$ are invertible maps, we also have
\begin{equation}
 0=f_{d_1}^{-1}\circ\cdots\circ f_{d_{J-1}}^{-1}\circ f_{d_{J}}^{-1}(x_J).
\end{equation}
\begin{Notation}
For every $h=1,2,3,4$ we define the map from $\RR^2$ onto itself
\begin{equation}\label{mathbff}
 \bar  f_h: (x,y)\mapsto(\Re(f_h(x+iy),\Im(f_h(x+iy)).
\end{equation}
\end{Notation}
\begin{Proposition}\label{pgraspmap}
 Let $\mathcal O\subset \RR^2$ and assume that there exists a triplet of control vectors 
$(\mathbf u,\mathbf v,\alpha)$ in $\{0,1\}^K\times\{0,1\}^K\times[0,1]^K$ such that $\mathcal O$ and the resulting configuration satisfy (C) (respectively (SC)) and (E). 
Let $d_j=d(u_j,v_j)$ for every $j= 1,\dots,K$. Then the scaled, translated and rotated copy of $\mathcal O$
\begin{equation}
\bar  f_{d_1}^{-1}\circ\cdots\circ\bar  f_{d_{J-1}}^{-1}\circ \bar  f_{d_{J}}^{-1}(\mathcal O).
\end{equation}
and the configuration corresponding to $((u_{J+j}),(v_{J+j}),\alpha_{J+j}))$ satisfy (C) (resp. (SC)) and (E). 

Moreover for every $((\bar u_j)_{j=1}^N,(\bar v_j)_{j=1}^N),0_N))\in\{0,1\}^N\times\{0,1\}^N\times\{0\}^N$ the 
 control vectors $\mathbf u=(\bar u_1,\dots,\bar u_N,u_1,\dots,u_K)$, $\mathbf v=(\bar v_1,\dots,\bar v_N,v_1,\dots,v_K)$ and 
$\alpha=(0_N,\alpha_1,\dots,\alpha_N)$ yield a configuration satisfying (C) (resp. (SC)) and (E) for the object
\begin{equation}
 \bar  f_{d_{N+K}}\circ  \cdots\circ \bar  f_{d_1}(\mathcal O).
\end{equation}
\end{Proposition}
\begin{figure}[ht]
\begin{center}
\includegraphics[scale=0.6]{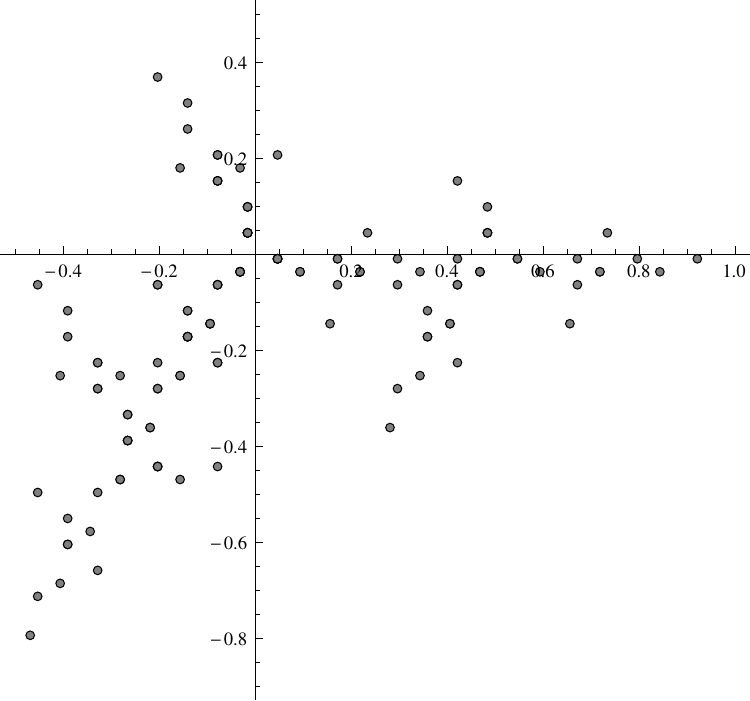} 
\end{center}
\caption{\label{graspmap} Every circle in this figure is of the form $\bar  f_{d_1}\circ\cdots \circ \bar  f_{d_4}(O)$ 
where $\mathcal O$ is the circle described in Theorem \ref{thgrasp}, with $\omega=\pi/3$, $\rho=2$ and $d_1,\dots,d_4\in\{1,2,3,4\}$. For every circle
in (A) there exists a configuration satisfying (SC) and (E). Moreover the angle between two consecutive phalanxes 
is either $\pi,\pi/6$ or $5\pi/6$, consequently no phalanx can intersect the scaled copy of $O_{\rho,\omega}$ without intersecting another phalanx. But this case is excluded
by Theorem \ref{thintersect}.} 
\end{figure}

\begin{Remark}
 The configuration described in the second part of the above result does not prevent in general the first $N$ phalanxes to touch or intersect the object. 
Nevertheless in the case $2\leq \rho< 3$ and $\omega=\pi/3$, namely when there not exist self-intersecting configurations (see Theorem \ref{thintersect}) 
and conditions of Theorem \ref{thgrasp} are satisfied, by iteratively applying the maps $\bar  f_1,\bar  f_2,\bar  f_3$ and $\bar  f_4$ on the circle $\mathcal O$ described in Theorem \ref{thgrasp}
one can construct a set of circles for which there exists a configuration with the form closure property, see Figure \ref{graspmap}. 
 \end{Remark}
\subsection{Form closure for three-dimensional objects: some examples}

In this section we show two configurations involving the whole hand.
 The general settings are the following: the hand has $5$ fingers ($H=5$), the angle between the phalanxes is constantly $\pi/2$, the
scaling ratio is $\rho=2$ for all the fingers. The distance between phalanxes is assumed to be constant.

 In the first example we are interested in a cylinder whose axis is normal to the planes of the fingers and 
whose section $O$ is a rescaling and a translation of a circle, $O_\omega$, in particular
\begin{equation}\label{eqO}
O=\bar  f_3(O_\omega)
\end{equation}
where $\bar  f_3$ is like in (\ref{mathbff}) -- see also (\ref{ff}).
 Remark that $O_\omega$ is well defined by virtue of Theorem \ref{thgrasp} and the related configuration is 
$u_j=v_j=1$ for $j=1,2,3$ and $\alpha_1=\alpha_3=1$ and $\alpha_2=0$. Since $d(1,0)=3$, by virtue of Proposition \ref{graspmap}
 then the configuration where the digit $1$ is perpended to extension control vector and 
the digit $0$ is perpended to the rotation control vector, is a suitable configuration for $O$ -- see (\ref{eqO}).
 It is easy to verify that this configuration also satisfies the contact constraints and if we apply it to all the fingers of the hand we obtain a configuration satisfying form closure for the
cylinder whose section is equal to $O$ -- see Figure \ref{figghand1}.
\begin{figure}
\begin{center}
\subfloat[$u_1=1,~v_1=0$]{\hskip-0.0cm\includegraphics[scale=0.1]{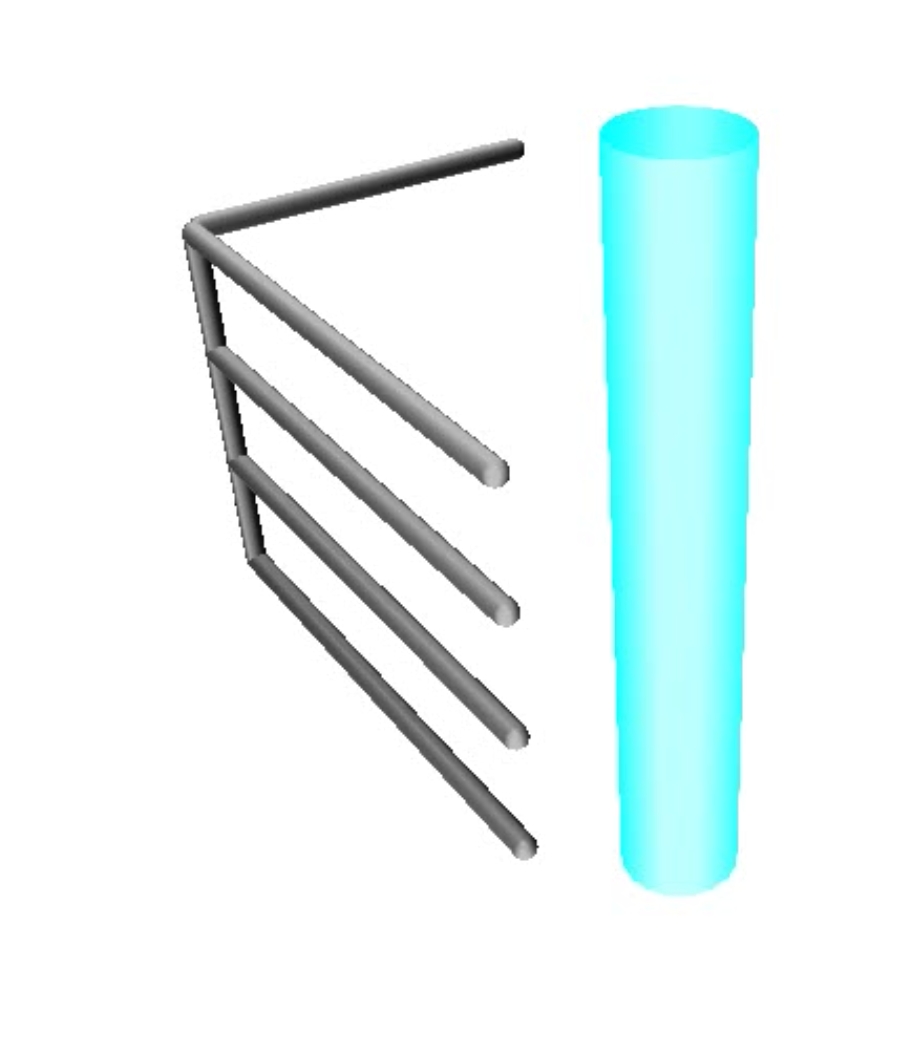}}
 \subfloat[$u_2=v_2=1$]{\hskip-0.5cm\includegraphics[scale=0.1]{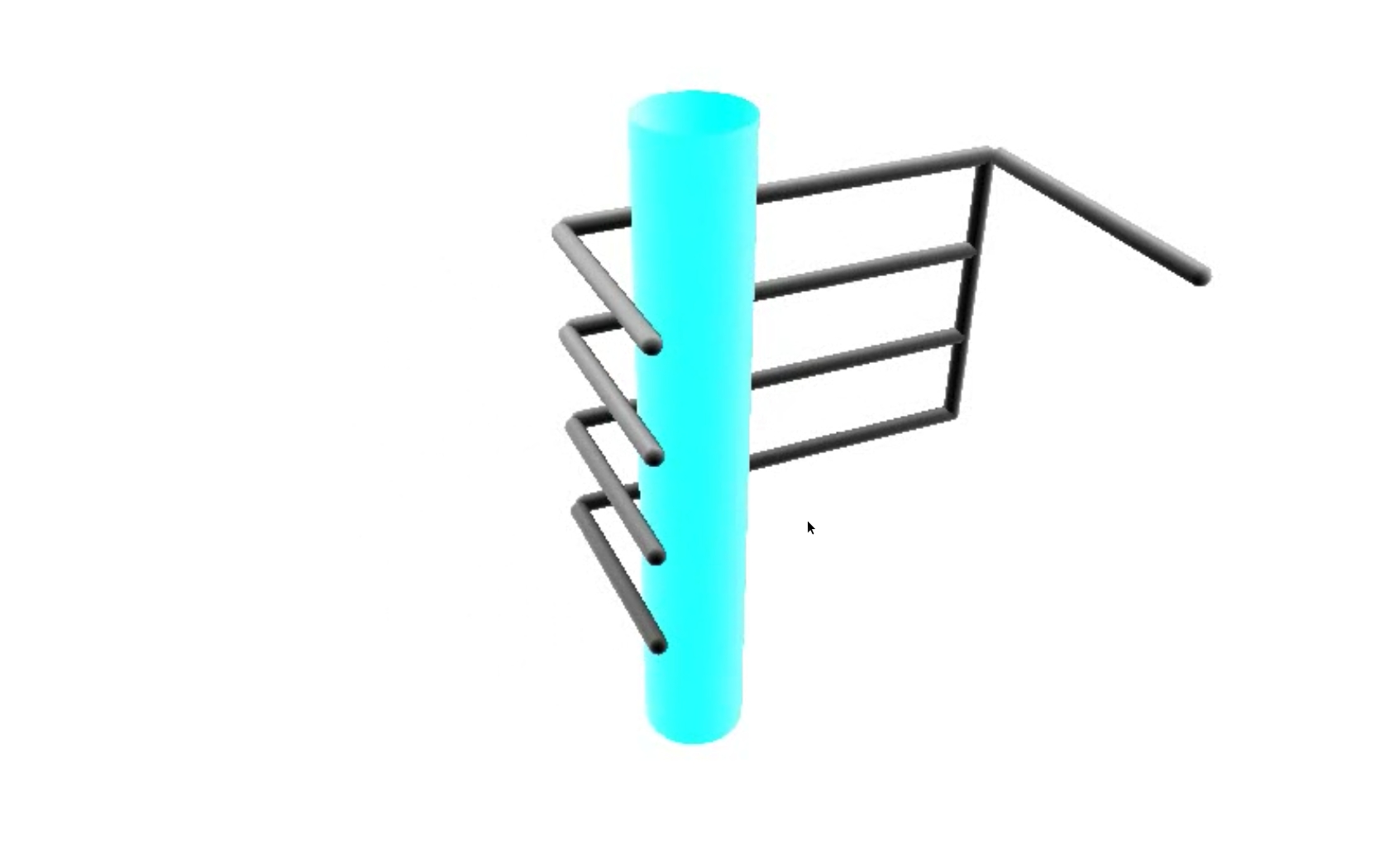}}
 \subfloat[$u_3=v_3=1$]{\includegraphics[scale=0.1]{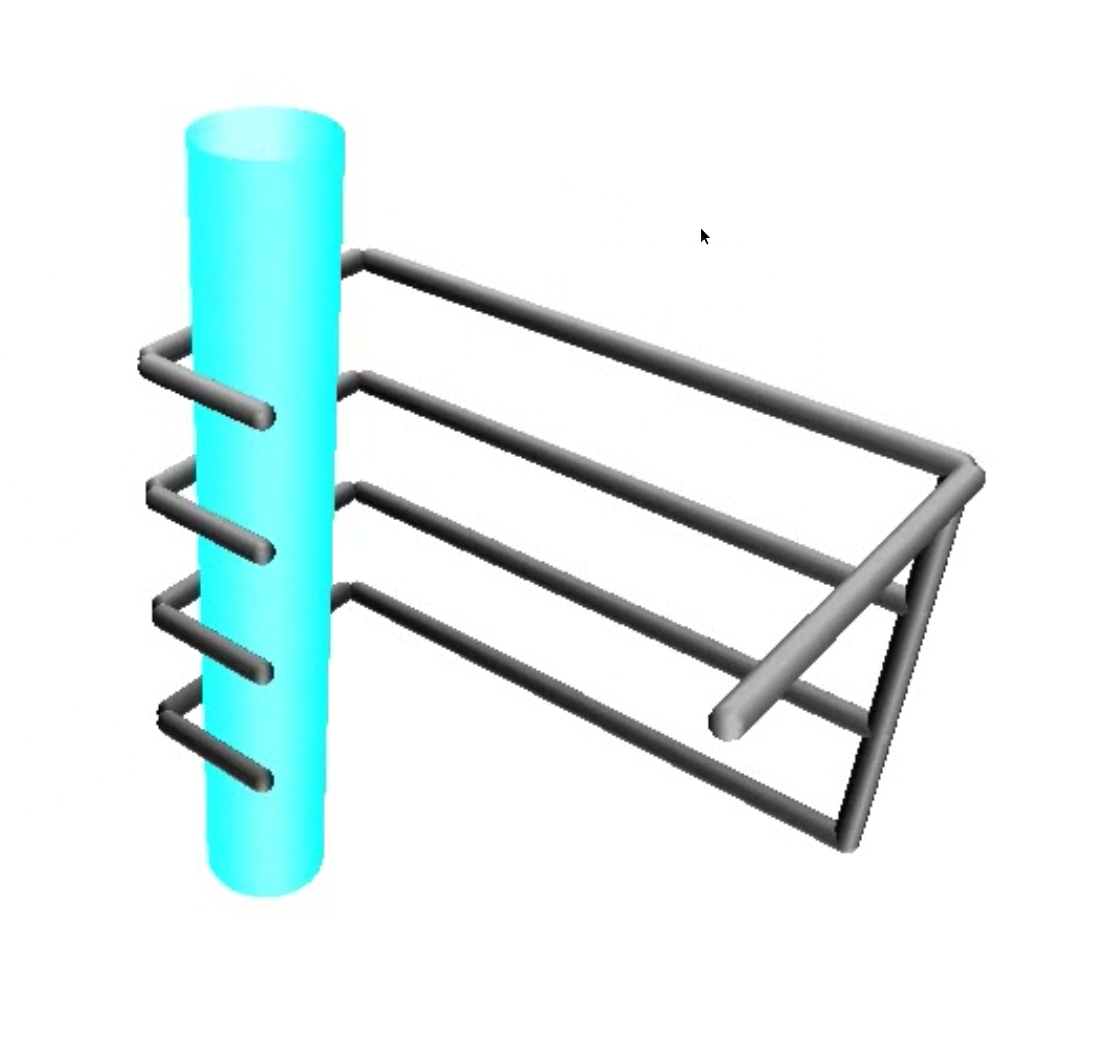}}\\
 \subfloat[$u_4=v_4=1$]{\includegraphics[scale=0.1]{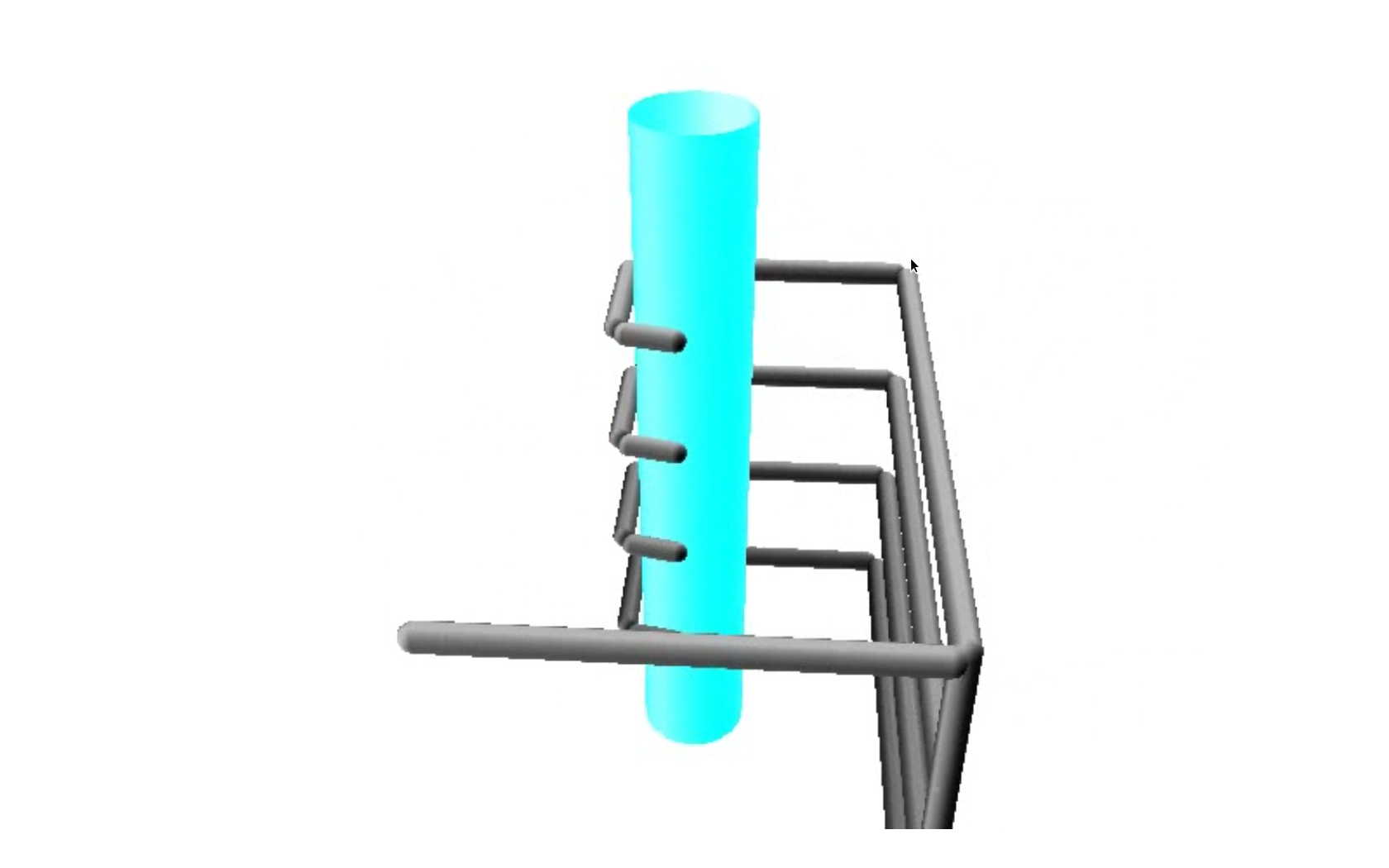}}
 \subfloat[$u_5=1$, $v_5=0$]{\includegraphics[scale=0.1]{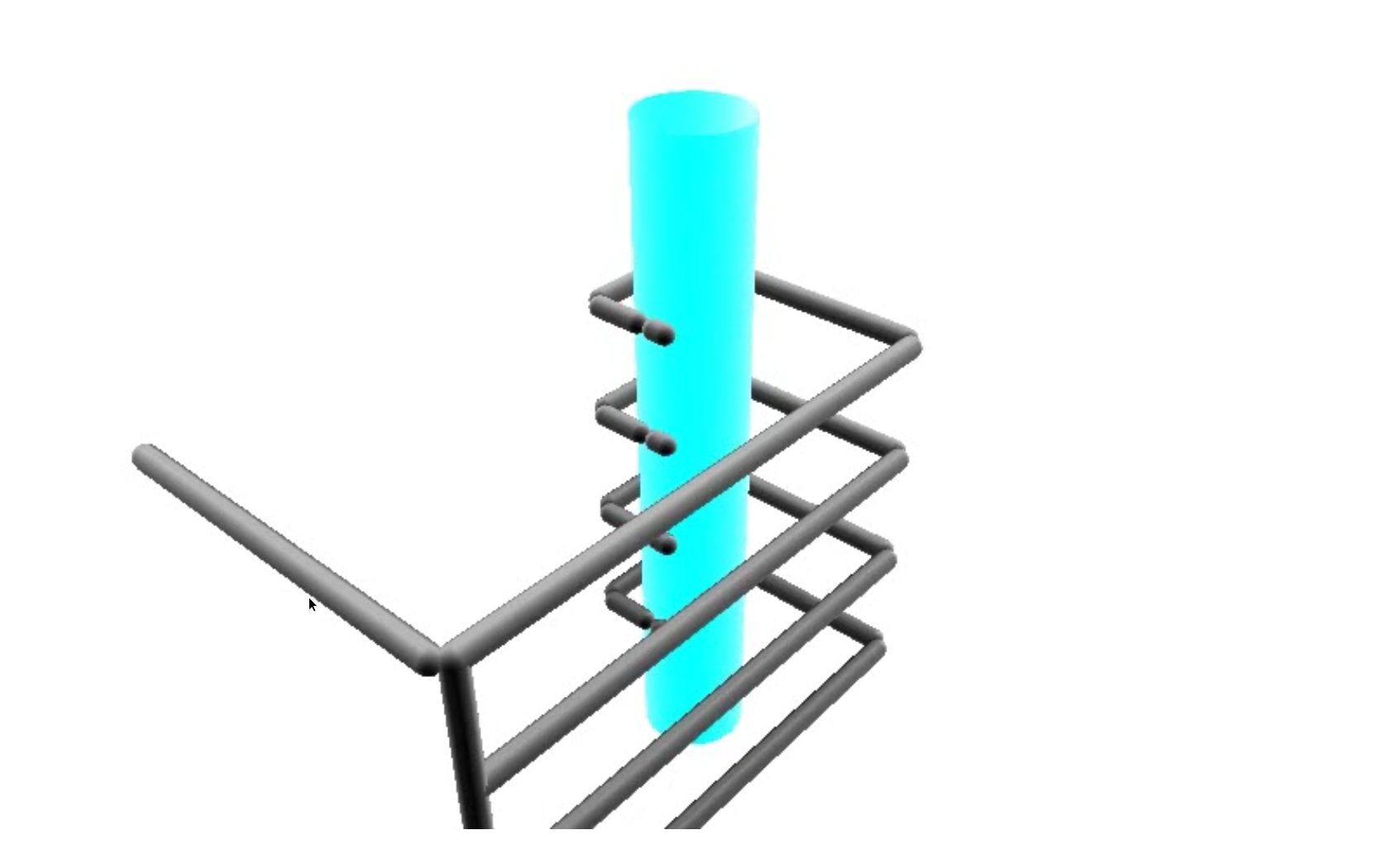}} 
\end{center}
 \caption{\label{figghand1} Various stages of the manipulation of a cylinder, whose section is $\bar  f_3(O_\omega)$. Due to numerical and graphical reasons, 
we extended one more phalanx with respect the configuration described in Theorem \ref{thgrasp}, 
so that for every finger the resulting motion control vector is $u_1=u_4=1$, $v_1=v_5=0$ and $u_j=v_j=1$ with $j=2,3,4$. }
\end{figure}

In Figure \ref{figghand2} we consider a cylinder whose section is a circle after a rotation of $2\pi/3$ along the $Oy$ direction.
 In this case we also use the opposable thumb, whose parameter $\omega$ is assumed to be different
from the others, in particular $\omega^{(1)}=\pi/4$. 
\begin{figure}
\begin{center}
 \includegraphics[scale=0.3]{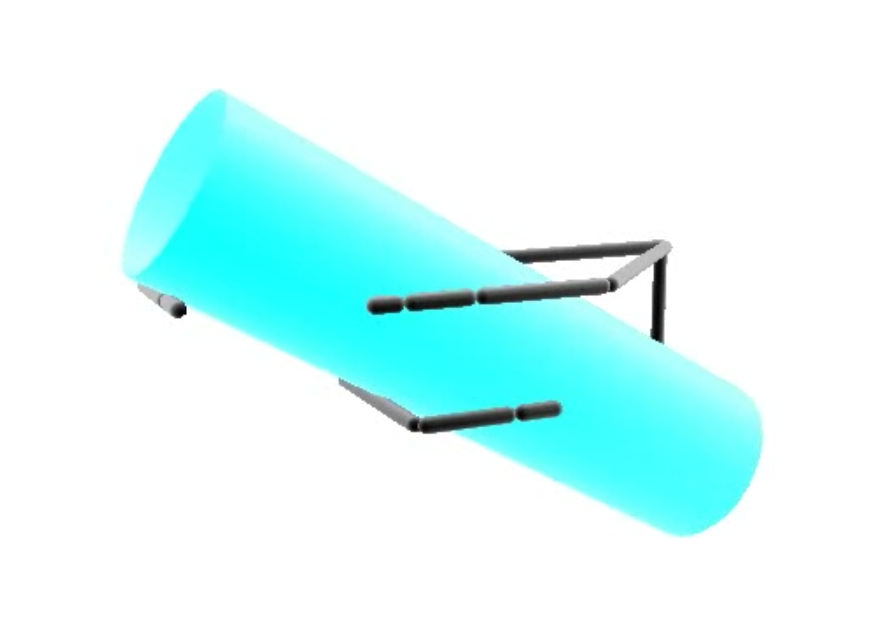} 
\end{center}
\caption{\label{figghand2} Thumb motion controls: $u_j=v_1=v_2=1$ for $j=1,2,3,4$; $v_j=0$ for $j=3,4$. \\Index finger motion controls: $u_j=v_3=1$ for $j=1,2,3,4$; $v_1,v_2,v_4=0$.\\
Last finger motion controls $u_j=v_2=v_3=1$ for $j=1,2,3,4$; $v_1,v_4=0$.}
\end{figure}
\begin{figure}
\begin{center}
\subfloat[]{\begin{picture}(120,160)
             \put(0,15){\includegraphics[scale=0.15]{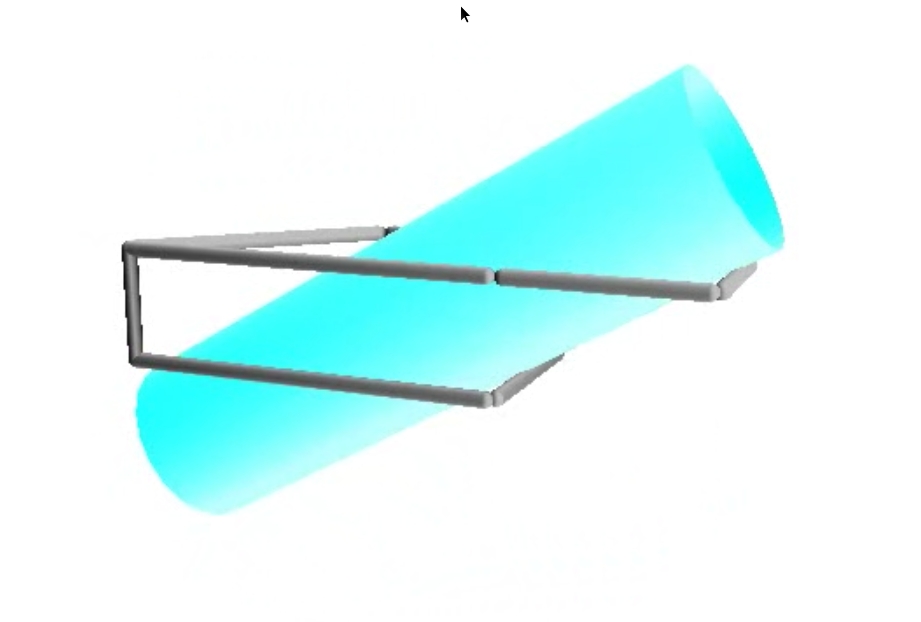}}
            \end{picture}}
\subfloat[]{\begin{picture}(120,160)
             \put(0,12){\includegraphics[scale=0.15]{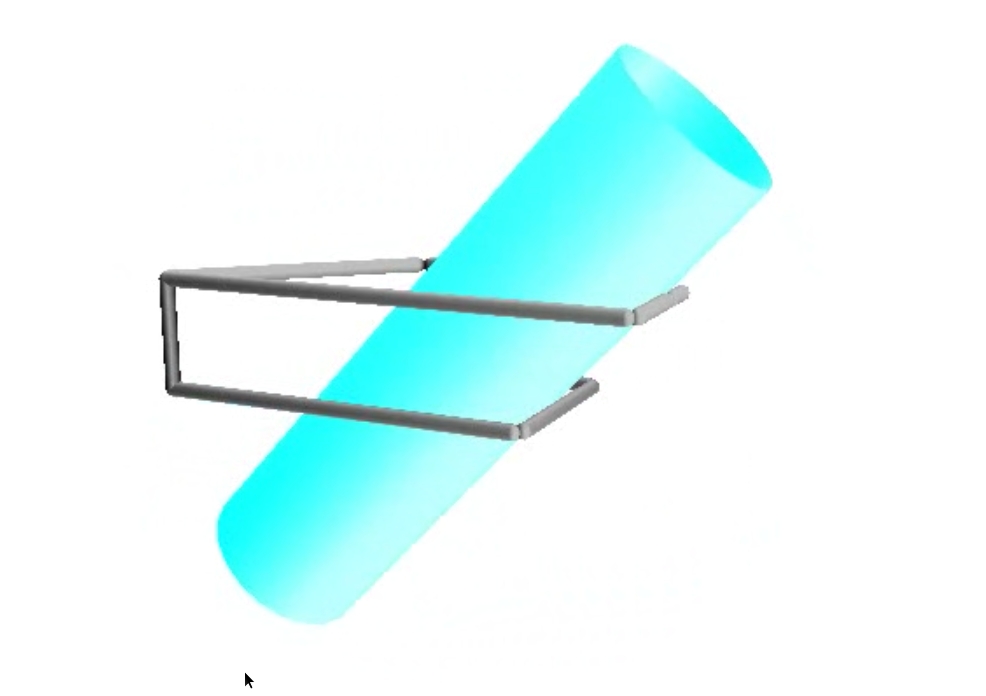}}
            \end{picture}}
\subfloat[]{\begin{picture}(120,160)
             \put(0,-25){\includegraphics[scale=0.15]{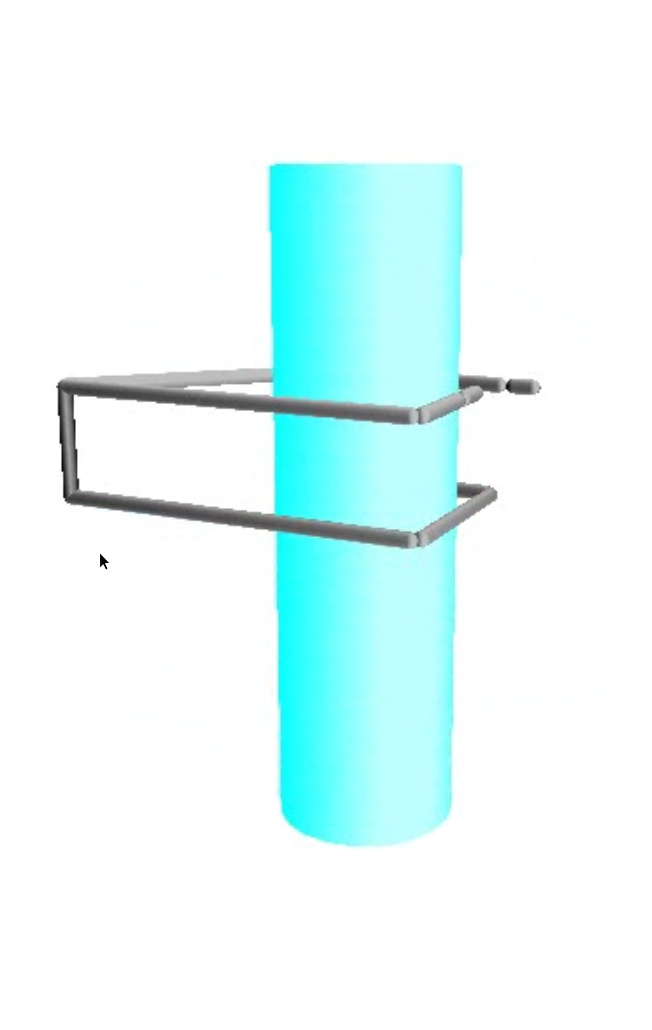}}
            \end{picture}}
\end{center}
\caption{Various stages of the manipulation of a cylinder}
\end{figure}

\newpage
\section{Conclusions}
We introduced a model for a robot hand whose fingers are planar manipulators with an arbitrary number of self-similar phalanxes, whose extension and rotations is controlled by binary actuators. 
We set the investigation of the workspace of every finger in the complex plane and we showed that its closure (with respect to the number of the phalanxes)
 is the attractor of an appropriate iterated function system. We also showed that a subset of the workspace, namely the points corresponding to full-rotation configurations, is indeed the 
set of expansions in an appropriate complex base with binary alphabet. This gives access of several results on non-standard numeration systems that we recalled at the end of Section 3 (for instance
the geometry of the workspace). We then used one of these results, the characterization of the convex hull of the workspace in a particular case, to establish some conditions on the parameters
of the fingers in order to avoid self-intersecting configurations.  An investigation of form closure properties is then approached: we considered some closure conditions and we described
a class of configurations satisfying them with respect to a suitable circle. We then used self-similarity and the iterated function system generating the reachable workspace 
to give an explicit form closure condition for a larger class of objects. Finally we showed some numerical simulations describing those configurations for three-dimensional objects.

Throughout this paper self-similarity and the connection with the theory of expansions in non-integer bases are our main tool of investigation.
 Although the problems studied in the present paper are well investigated in the literature, the novelty is the approach based on 
number theory and, more exactly, on the expansions in non-integer bases. This approach is suitable to threat the problem in a general context.  
 A variety of theoretical results can yet be applied in order to give more precise description of self-similar manipulators.
 For instance, a number of algorithms for the expansions in complex bases are available in the literature: with appropriate modifications they may be reread as inverse kinematics algorithms. 
Redundancy of the representations (namely redundancy of configurations reaching a given point), optimization on the digit sequences, geometrical investigations of the representable set 
(namely of the reachable workspace and/or some of its subsets) 
are active research domains on the field of non-standard numeration systems.

\end{document}